\documentclass[12pt,a4paper,reqno]{amsart}

\usepackage[margin=1in,footskip=0.25in]{geometry}
\usepackage{amsmath, amsthm, amssymb}
\usepackage{graphicx}
\usepackage{mathrsfs}

\usepackage{float}

\usepackage{booktabs}

\usepackage{hyperref}
\usepackage{xcolor}

\newtheorem{theorem}{Theorem}

\date{}

\allowdisplaybreaks
\begin{document}

\title[Crossing limit cycles for a family of isochronous centers]
{Crossing limit cycles of discontinuous piecewise differential systems with Pleshkan's isochronous centers}

\author{Sonia Isabel Renteria Alva$^1$}
\address{$^1$ Instituto de Matemática Pura e Aplicada, Estrada Dona Castorina 110, Jardim Botânico, Rio de Janeiro, 22460-320, Brazil}
\email{sonia.alva@impa.br}

\author{Pedro Iván Suárez Navarro$^2$}
\address{$^2$ Instituto de Matemática Pura e Aplicada,  Estrada Dona Castorina 110, Jardim Botânico, Rio de Janeiro, 22460-320, Brazil}
\email{ivan.suarez@impa.br}

\subjclass[2010]{37G15, 37D45.}

\keywords{Limit cycles, linear centers, cubic isochronous centers with homogeneous nonlinearities, discontinuous piecewise differential systems, first integrals}

\begin{abstract}
In recent decades, piecewise linear differential systems have attracted considerable attention due to their ability to describe a wide range of phenomena. A central problem, as in the theory of general planar differential systems, is to determine the existence and the maximal number of crossing limit cycles. However, deriving sharp upper bounds for this quantity remains a highly challenging problem. In this work we study crossing limit cycles in planar discontinuous piecewise differential systems separated by a straight line, where each subsystem is either a linear center or a cubic isochronous center with homogeneous nonlinearities. Within this setting, we consider all possible combinations arising from these families, leading to fifteen distinct classes of piecewise systems. Using the existence of first integrals, we reduce the detection of crossing limit cycles to algebraic closing conditions on the discontinuity set, which allows for a systematic and unified analysis across all configurations. As a consequence, we establish explicit upper bounds for the number of crossing limit cycles in all cases except for three configurations that remain open. In addition, we construct examples exhibiting three crossing limit cycles in every class, providing a nontrivial uniform lower bound. Our results extend and complement earlier work in the literature by including previously unstudied configurations and improving some known bounds, thereby providing a comprehensive description of the number of crossing limit cycles within this class of systems
\end{abstract}

\maketitle

\section{Introduction and statement of the main results}
The study of limit cycles in planar differential systems is a classical and central topic in dynamical systems, dating back to the pioneering works of Poincaré and closely related to the second part of Hilbert’s 16th problem. In this context, one considers planar systems defined by polynomial vector fields of degree $n$, where each component is given by a polynomial function in the plane. The second part of Hilbert’s 16th problem concerns determining the maximum number of limit cycles that such systems can exhibit for all possible choices of the defining polynomials; for more details see~\cite{ilyashenko2002centennial, zhang1992qualitative}. Recall that a \emph{limit cycle} is an isolated periodic orbit, and its existence play a fundamental role in understanding the qualitative behavior of nonlinear systems, with numerous applications in physics, engineering, and applied sciences; see~\cite{bernardo2008piecewise, simpson2010bifurcations}.

In recent years, increasing attention has been devoted to discontinuous piecewise differential systems, motivated by their relevance in modeling phenomena with abrupt transitions, such as switching systems, control processes, mechanical impacts, and electrical circuits. In this context, one of the main difficulties is the analysis of periodic solutions, particularly the determination of the number and configuration of limit cycles.

When dealing with planar discontinuous systems separated by a straight line, two types of limit cycles may arise: sliding and crossing limit cycles. In this work we focus on crossing limit cycles, namely, periodic orbits that intersect the discontinuity set at crossing points. The study of such solutions is especially challenging due to the interaction between the dynamics defined on each side of the discontinuity.

A powerful approach to this problem relies on the existence of first integrals. When each subsystem is integrable, the detection of crossing limit cycles can be reduced to solving algebraic closing conditions on the discontinuity line. This strategy has been successfully applied in several works, particularly for systems formed by linear and cubic isochronous centers, where explicit upper bounds for the number of crossing limit cycles have been obtained.

Isochronous centers, and in particular those classified by Pleshkan \cite{pleshkan1969new}, provide a natural and tractable framework for the study of nonlinear planar differential systems. These systems are characterized by the property that all periodic orbits surrounding the center have the same period, and, in many cases, they admit explicit first integrals with a relatively simple algebraic structure; see, for instance, \cite{loud1964behavior,chavarriga1999survey}. This feature makes them especially suitable for the analysis of discontinuous piecewise differential systems, where the detection of crossing limit cycles can be reduced to solving algebraic closing conditions on the discontinuity set. In recent years, this approach has been successfully applied to several classes of piecewise systems; see, for example, \cite{llibre2018piecewise,llibre2022crossing,huan2019limit,baymout2024limit,he2024limit}. In particular, for systems formed by linear and cubic isochronous centers, explicit upper bounds for the number of crossing limit cycles have been obtained in \cite{buzzi2022crossing}.
 

In this paper we study planar piecewise differential systems of the form
\begin{eqnarray}\label{eqmainOrig}
(\dot x, \dot y)=\begin{cases}
F^{+}(x,y)=(X^{+}(x,y), Y^{+}(x,y)), & \text{if } (x,y)\in \Sigma^{+},\\
F^{-}(x,y)=(X^{-}(x,y), Y^{-}(x,y)), & \text{if } (x,y)\in \Sigma^{-},
\end{cases}
\end{eqnarray}
where the separation curve is the straight line
$\Sigma = \Sigma^{+} \cap \Sigma^{-}$ where
\begin{eqnarray*}
\Sigma^{+} = \lbrace (x,y): x\geq 0\rbrace, \text{ and }
\Sigma^{-} = \lbrace (x,y): x\leq 0 \rbrace .
\end{eqnarray*}
We observe that differential system~\eqref{eqmainOrig} is bi-valued along the discontinuity line $\Sigma=\{(x,y): x=0\}$, in the sense that it admits two values, $F^+(0,y)$ and $F^-(0,y)$, on $\Sigma$.
According to \cite{filippov2013differential}, a point $p$ on the discontinuity set is a \textit{crossing point} if $X^{-}(p)X^{+}(p) > 0$. A periodic solution of system~\eqref{eqmainOrig} is said to be of crossing type if it intersects the discontinuity curve exactly twice at crossing points, and it is called a \textit{crossing limit cycle} if it is isolated among such solutions. For simplicity, throughout this paper we use the term \emph{limit cycle} to refer to crossing limit cycles.

Consider the polynomial differential systems of the form
\begin{equation}\label{sistemacubico}
\begin{array}{l}
\dfrac{dx}{dt}=\dot x = -y + a_{30}x^3 +a_{21}x^2 y+ a_{12}xy^2 +a_{03}y^3, \vspace{0.2cm} \\
\dfrac{dy}{dt}=\dot y = x + b_{30}x^3 +b_{21}x^2 y+ b_{12}xy^2 +b_{03}y^3.
\end{array}
\end{equation}

Pleshkan in \cite{pleshkan1969new} classify which of these differential systems have an isochronous center at the origin of coordinates. Thus a cubic system \eqref{sistemacubico} has an isochronous center at the origin if and only if the system can be transformed to one of the following four differential systems
\begin{equation}\label{sistemas1s2s3s4}
\begin{aligned}
&(\mathtt{S}_1): \; \left.
\begin{array}{l}
\dot{x}=-y + x^3 - x y^2, \\
\dot{y}=x + x^2y - y^3,
\end{array} \right. \; \; \; \; \; \; \; 
(\mathtt{S}_2): \; \left. 
\begin{array}{l}
\dot{x}=-y + x^3 - 3 x y^2, \\
\dot{y}=x + 3x^2y - y^3, 
\end{array} \right.  \\
& (\mathtt{S}_3): \; \left.
\begin{array}{l}
\dot{x}=-y + 3x^2y,  \\
\dot{y}=x -2x^3 +9 x y^2,
\end{array} \right. \; \; \; \; \; \;
(\mathtt{S}_4): \; 
\begin{array}{l}
\dot{x}=-y - 3x^2y, \\
\dot{y}=x + 2x^3-9xy^2.
\end{array} 
\end{aligned}
\end{equation}
doing a linear change of coordinates and a rescaling of time.

The first integrals of the differential systems \eqref{sistemas1s2s3s4} can be founded in \cite{chavarriga1999survey}, and they are
\begin{equation*}\label{intprimeiras1s2s3s4}
\begin{aligned}
&(\mathtt{S}_1): \widetilde{H_1}(x,y)=\dfrac{x^2+y^2}{1+2xy}, \; \; \; \; \; \; \; \; \; \; \; \; \; \; \; \;  \; \; \; \;  \; \; \; 
(\mathtt{S}_2): \widetilde{H_2}(x,y)=\dfrac{(x^2+y^2)^2} {1+4xy},\\
&(\mathtt{S}_3): \widetilde{H_3}(x,y)=\dfrac{x^2+y^2-4x^4+4x^6}{(-1+3x^2)^3}, \; \; \; \; \; 
(\mathtt{S}_4): \widetilde{H_4}(x,y)=\dfrac{x^2+y^2+4x^4+4x^6}{(1+3x^2)^3}.
\end{aligned}
\end{equation*}

Llibre and Teixeira proved in \cite{llibre2018piecewise} that after doing an affine transformation and a rescaling of the independent variable any linear center can be written into the form
\begin{equation}\label{sistemalinear}
\begin{aligned}  
(\mathtt{L}_c): \dot{x}= - A x -\dfrac{4 A^2 + \omega^2}{4 D} y + B, \quad 
\dot{y}=D x + A y + C,
\end{aligned}
\end{equation}
where $A, B, C,  D,  \omega$ are real numbers with $D, \omega>0$. This system has first integral
\begin{equation*}
H_L(x,y) = 4 ( D x + A y)^2 + 8 D (C x - B y) + y^2 \omega^2.
\end{equation*}

We investigate discontinuous piecewise differential systems separated by a straight line, where each subsystem, after an affine change of variables, reduces to one of the canonical forms $(\mathrm{Lc})$, $(\mathtt{S}_1)$, $(\mathtt{S}_2)$, $(\mathtt{S}_3)$, or $(\mathtt{S}_4)$. We consider the framework introduced in \cite{buzzi2022crossing}, where systems formed by combinations of $(\mathrm{Lc})$, $(\mathtt{S}_1)$, and $(\mathtt{S}_2)$ were analyzed using first integrals to reduce the problem to algebraic closing conditions on the discontinuity line, yielding upper bounds and examples of crossing limit cycles.

In the present work, we extend this analysis by incorporating all remaining combinations involving $(\mathtt{S}_3)$ and $(\mathtt{S}_4)$, thereby completing the study within this class. For most configurations, we determine explicit upper bounds and provide examples exhibiting at least three crossing limit cycles. In some cases where the exact bound remains open, we construct examples showing the existence of multiple limit cycles. In addition, we refine some previously reported bounds; for instance, in one configuration where an upper bound of nine was proposed, we prove that the correct bound is eight and provide examples with three crossing limit cycles. We also obtain examples attaining the maximal number of limit cycles in cases where previously only partial results were available.

Our first main result provides upper bounds for the number of crossing limit cycles that can appear in planar discontinuous piecewise differential systems separated by a straight line, where each subsystem belongs, after an affine change of variables, to one of the families $(\mathtt{L}_c)$, $(\mathtt{S}_1)$, $(\mathtt{S}_2)$, $(\mathtt{S}_3)$, or $(\mathtt{S}_4)$.
\begin{theorem}\label{Thm1}
Consider planar discontinuous piecewise differential systems separated by the straight line $x=0$ and formed by two subsystems which, after an affine change of variables, belong to the classes $(\mathtt{L}_c)$, $(\mathtt{S}_1)$, $(\mathtt{S}_2)$, $(\mathtt{S}_3)$, or $(\mathtt{S}_4)$. Then the maximum number of crossing limit cycles satisfies the following bounds:
\begin{itemize}
\item[(i)] at most three for systems of type $(\mathtt{L}_c)$ –$(\mathtt{S}_3)$, and there exist systems of this type with exactly three limit cycles, see Fig.~\ref{fig-HL-S3};
\item[(ii)] at most three for systems of type $(\mathtt{L}_c)$–$(\mathtt{S}_4)$, and there exist systems of this type with exactly three limit cycles, see Fig.~\ref{fig-HL-S4};
\item[(iii)] at most five for systems of type $(\mathtt{S}_3)$–$(\mathtt{S}_1)$, and there exist systems of this type with three limit cycles, see Fig.~\ref{fig-S1-S3};
\item[(iv)] at most five for systems of type $(\mathtt{S}_4)$–$(\mathtt{S}_1)$, and there exist systems of this type with three limit cycles, see Fig.~\ref{fig-S1-S4};
\item[(v)] at most eight for systems of type $(\mathtt{S}_2)$–$(\mathtt{S}_2)$, and there exist systems of this type with three limit cycles, see Fig.~\ref{fig-S2-S2};
\item[(vi)] at most thirteen for systems of type $(\mathtt{S}_3)$–$(\mathtt{S}_2)$, and there exist systems of this type with three limit cycles, see Fig.~\ref{fig-S2-S3};
\item[(vii)] at most thirteen for systems of type $(\mathtt{S}_4)$–$(\mathtt{S}_2)$, and there exist systems of this type with three limit cycles, see Fig.~\ref{fig-S2-S4}.
\end{itemize}
\end{theorem}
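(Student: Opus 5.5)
The strategy is the one used in \cite{llibre2018piecewise,buzzi2022crossing}. In each half-plane we write the most general subsystem of the prescribed class. If $\mathtt{S}_k$ is used in $\Sigma^{\pm}$, we apply a general affine change $(x,y)\mapsto(a_1x+b_1y+c_1,\,a_2x+b_2y+c_2)$ with $a_1b_2-a_2b_1\neq0$ to $(\mathtt{S}_k)$, so that the first integral becomes $H_k^{\pm}(x,y)=\widetilde{H_k}(a_1x+b_1y+c_1,a_2x+b_2y+c_2)$. For $(\mathtt{L}_c)$ we use the normal form \eqref{sistemalinear} directly, with first integral $H_L$; by \cite{llibre2018piecewise} no loss of generality is involved, since one affine change can be absorbed. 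A crossing periodic orbit meets $\Sigma=\{x=0\}$ at exactly two points $(0,y_1)$ and $(0,y_2)$ with $y_1\neq y_2$. These points lie on the same level curve of the first integral on each side, which gives the closing system
\begin{equation*}
H^{+}(0,y_1)=H^{+}(0,y_2),\qquad H^{-}(0,y_1)=H^{-}(0,y_2).
\end{equation*}
Because every $\widetilde{H_k}$ and $H_L$ is rational, we clear denominators (these are nonzero on the relevant orbits, since the level curves of $\widetilde{H_k}$ avoid the singular lines $1+2xy=0$, $1+4xy=0$, $3x^2=\pm1$) and divide out the trivial factor $y_1-y_2$. What remains is two polynomial equations $P^{+}(y_1,y_2)=0$ and $P^{-}(y_1,y_2)=0$. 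Each isolated solution with $y_1<y_2$ corresponds to at most one limit cycle. Moreover, the symmetry $(y_1,y_2)\leftrightarrow(y_2,y_1)$ halves the count, so we work in the symmetric variables $s=y_1+y_2$ and $p=y_1y_2$.

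Next we compute degrees. For $(\mathtt{L}_c)$, $H_L(0,y)$ is quadratic in $y$, so the reduced condition is linear in $(y_1,y_2)$; in $(s,p)$ it is linear in $s$ alone, and this lets us eliminate one variable. For $\mathtt{S}_1$, the restriction $H^{\pm}(0,y)$ is a quotient of two quadratics, giving a reduced equation of degree $2$ in $(s,p)$ after symmetrization. For $\mathtt{S}_2$ the numerator is quartic, so the restriction is a quotient of a quartic by a quadratic, and after symmetrizing the condition has degree at most $3$ in $(s,p)$ (total degree 4 in $s$, linear-ish in $p$). For $\mathtt{S}_3$ and $\mathtt{S}_4$ the restriction is a quotient of a sextic by a sextic, because the denominator is a cube of a quadratic in $y$. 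After symmetrization this yields a curve in $(s,p)$ of degree roughly $5$, but it is linear in $p$ in favorable coordinates, since the $y$-dependence enters through a single quadratic $u(y)$.

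In the $(\mathtt{L}_c)$–$(\mathtt{S}_{3,4})$ cases we substitute the linear relation $s=s_0$ and obtain a single polynomial in $p$ whose degree we expect to be $3$. That gives (i) and (ii). In the remaining cases we compute the resultant of $P^{+}$ and $P^{-}$ with respect to $p$, or use Bézout's theorem on the two curves in $(s,p)$. We then subtract the solutions that are forced and spurious: points with $y_1=y_2$, points at infinity, and the common zeros of the denominators. This should yield $2\cdot 2+1=5$ for (iii) and (iv), $8$ for (v) (sharpening the earlier $9$ by removing one spurious root), and $13$ for (vi) and (vii).

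The realization part is constructive. For each class we fix three target pairs $(y_1^{(i)},y_2^{(i)})$ and solve the closing equations for the affine parameters; this is linear in many of the parameters once the others are frozen. We then check by computer algebra that the three solutions are simple, so that the Jacobian of the closing system is nonzero and the cycles are isolated, and that they are crossing points, i.e.\ $X^{-}X^{+}>0$ at each intersection. Finally we verify that each arc lies in the correct half-plane and on a single branch of the level curve, away from the singular lines.

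The main obstacle is the sharp counting in cases (v)–(vii). The raw Bézout bound is too large, and the resultant polynomials have enormous coefficients in the roughly twelve parameters. To discard degenerate roots I would first normalize using the translation and scaling along $\Sigma$, which reduces the parameter count. I would then identify explicit factors of the resultant, namely powers of the discriminant-type factors coming from $y_1=y_2$ and from vanishing denominators, and divide them out symbolically before taking the degree. This is the step where the improvement from $9$ to $8$ must be made precise.
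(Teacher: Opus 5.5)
Your framework is the paper's: closing equations from the first integrals on $x=0$, cancelling $y_1-y_2$ and the denominators, then counting solutions modulo $y_1\leftrightarrow y_2$. For (i)--(ii) your substitution $s=s_0$ is sound once justified. $\mathscr{P}_3$ is symmetric of degree $7$, so giving $s$ weight $1$ and $p$ weight $2$ shows it has degree at most $3$ in $p$. This is the paper's degree-$6$ polynomial in $y_2$, whose roots are paired by $y_2\leftrightarrow s_0-y_2$.

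Your side remark that the $\mathtt{S}_3$, $\mathtt{S}_4$ condition is linear in $p$ is false. $\mathscr{P}_3$ contains $9b_1^8y_1^2y_2^2(y_1+y_2)(y_1^2+y_2^2)=9b_1^8p^2s(s^2-2p)$. Linearity would also cap (i) at one cycle, contradicting the example.

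The genuine gap is in (iii)--(vii): the numbers $5$, $8$, $13$ are announced, not derived. The formula $2\cdot 2+1$ has no mechanism behind it, and the ``one spurious root'' in (v) is never identified. Some input degrees are also wrong. Since $\mathscr{R}_1=Ay_1y_2+B(y_1+y_2)+C$, the $\mathtt{S}_1$ condition is a \emph{line} in the $(s,p)$-plane, not a conic. B\'ezout against the $\mathtt{S}_{3,4}$ curve, of ordinary degree $5$ in $(s,p)$, then gives $5$ immediately. The paper obtains the same by solving $\mathscr{R}_1=0$ for $y_1$ and getting a degree-$10$ polynomial in $y_2$.

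For (v)--(vii), B\'ezout gives only $3\cdot 3=9$ and $3\cdot 5=15$. The subtractions you propose do not help: diagonal points $y_1=y_2$ and common zeros of denominators are not forced common points of the two curves. The forced point is at infinity, $[s:p:w]=[0:1:0]$.
\begin{itemize}
\item For $\mathtt{S}_2$, write $H(0,y)=N(y)/D(y)$ with $N=\sum n_iy^i$, $D=\sum d_jy^j$. The top form of the reduced condition is $n_4\,s\,(d_0s^2+d_1sp+d_2p^2)$, so every $\mathtt{S}_2$ cubic passes through that point.
\item For $\mathtt{S}_{3,4}$, one has $H=\tfrac{4}{27}+R/D$ with $R=\sum r_iy^i$, $\deg R=2$, $\deg D=6$. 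The top form is $-d_6\,s^3(r_0s^2+r_1sp+r_2p^2)$, and the quartic part has no $p^4$ term, so the quintic is singular there.
\end{itemize}
The intersection multiplicity at $[0:1:0]$ is therefore at least $1$, resp.\ $2$. This gives $9-1=8$ and $15-2=13$, and the line $s=s_0$ through the double point recovers $5-2=3$.

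The paper reaches $8$ and $13$ differently: it computes $\operatorname{Res}(\mathscr{M}_2,\mathscr{N}_i,y_1)$, of degree $16$, resp.\ $26$, in $y_2$, and halves. Your proposal supplies neither argument, so the upper bounds in (iii)--(vii) remain unproved. The existence half is likewise only a search strategy, whereas the paper exhibits explicit systems and their crossing points.
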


The second main result shows the existence of multiple crossing limit cycles for configurations not covered by the previous theorem.

\begin{theorem}\label{Thm2}
The following statements hold:
\begin{itemize}
\item[(i)] There exist piecewise differential systems formed by $(\mathtt{S}_1)$ and $(\mathtt{S}_2)$ with three crossing limit cycles, see Fig.~\ref{fig-S1-S2};
\item[(ii)] There exist piecewise differential systems formed by $(\mathtt{S}_3)$ and $(\mathtt{S}_3)$ with three crossing limit cycles, see Fig.~\ref{fig-S3-S3};
\item[(iii)] There exist piecewise differential systems formed by $(\mathtt{S}_3)$ and $(\mathtt{S}_4)$ with three crossing limit cycles, see Fig.~\ref{fig-S3-S4};
\item[(iv)] There exist piecewise differential systems formed by $(\mathtt{S}_4)$ and $(\mathtt{S}_4)$ with three crossing limit cycles, see Fig.~\ref{fig-S4-S4}.
\end{itemize}
\end{theorem}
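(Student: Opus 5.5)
Theorem~\ref{Thm2} asks only for existence, so I will prove each item by an explicit construction, using the first integrals to reduce everything to a finite algebraic check on $\Sigma=\{x=0\}$. In each of the four configurations I take on $\Sigma^{+}$ a system obtained from one of $(\mathtt{S}_i)$ by an affine change $(x,y)\mapsto (a_1x+b_1y+c_1,\,a_2x+b_2y+c_2)$. Its first integral is then $H_i^{+}(x,y)=\widetilde{H_i}(a_1x+b_1y+c_1,\,a_2x+b_2y+c_2)$. On $\Sigma^{-}$ I take a second transformed system with first integral $H_j^{-}$, built the same way with other parameters.

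A crossing periodic orbit meets $\Sigma$ at two points $(0,y_1)$ and $(0,y_2)$ with $y_1\neq y_2$. These points must satisfy the closing conditions
\begin{equation*}
e_1:=H_i^{+}(0,y_1)-H_i^{+}(0,y_2)=0,\qquad e_2:=H_j^{-}(0,y_1)-H_j^{-}(0,y_2)=0 .
\end{equation*}
I clear denominators, which are nonzero near the chosen points because the level sets surrounding the center stay away from the singular curves $1+2xy=0$, $1+4xy=0$ and $3x^2=\pm1$. I then divide by the factor $y_1-y_2$. This gives two polynomial equations. Using the symmetric variables $s=y_1+y_2$ and $p=y_1y_2$ lowers the degrees and removes the symmetry $y_1\leftrightarrow y_2$.

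Rather than choosing the parameters blindly, I will fix three target pairs $(y_1^{(k)},y_2^{(k)})$, $k=1,2,3$, for example $(1,-1)$, $(2,-2)$, $(3,-3)$ or a slightly asymmetric variant. I then impose $e_1=e_2=0$ at these pairs as linear or low-degree conditions on the free affine parameters. The transformed $\widetilde{H}$ depends polynomially on the entries $a_1,\dots,c_2$, so I will set most entries to simple values and solve for the remaining two or three. The three configurations involving $(\mathtt{S}_3)$ and $(\mathtt{S}_4)$ are handled the same way, since their first integrals are rational in $x^2$ and $y^2$ after the change of variables.

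Once concrete rational or algebraic parameters are chosen, I verify the following.
\begin{itemize}
\item[(a)] The resultant of the two polynomial closing equations with respect to one variable has exactly three admissible real roots. Admissible means $y_1\neq y_2$, the pair is not counted twice under the swap, and it corresponds to a real orbit.
\item[(b)] At each root the Jacobian of $(e_1,e_2)$ is nonzero. This gives isolation, so each orbit is a limit cycle.
\item[(c)] The points $(0,y_k)$ are crossing points, that is $X^{+}X^{-}>0$ there. Also, each arc of the level curve joining them lies entirely in its half-plane, is part of a closed oval around the respective center, and avoids the singular curves of the first integral.
\end{itemize}
Conditions (a) and (b) are exact algebraic computations. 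For (c) I will use sign checks along the arcs, and the pictures referred to in Fig.~\ref{fig-S1-S2}--\ref{fig-S4-S4} illustrate it.

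The main obstacle is step (c), together with arranging that all three algebraic solutions are genuine. The nonlinear first integrals, especially those of $(\mathtt{S}_3)$ and $(\mathtt{S}_4)$ with their $(\pm1+3x^2)^3$ denominators, have level sets that are not all closed ovals. A solution of the closing equations can therefore lie on an unbounded branch, or its arc can cross the line $x=0$ again. I will first try to keep the centers close to $\Sigma$ and use small scalings, so that the three orbits are small perturbations of nested circles. If that fails, I will tune one parameter numerically to push spurious roots out of the admissible region, and then confirm the final example by interval or sign arguments.
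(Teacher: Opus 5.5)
Your write-up is a search protocol, not a proof. Theorem~\ref{Thm2} is a pure existence statement, so the construction \emph{is} the proof. You never exhibit a single system for any of the four configurations, and you do not show that your fitting procedure must succeed; you yourself flag step (c) as the main obstacle and leave it to numerical tuning. This is exactly what the paper supplies. For each pair it writes down explicit transformed systems with their first integrals and reduces the closing conditions to explicit polynomial equations. For $\mathtt{S}_1$--$\mathtt{S}_2$, for instance, the $\mathtt{S}_1$ condition becomes $4y_1y_2+51(y_1+y_2)+10=0$, paired with an explicit second equation. It then lists three numerical solution pairs in each case, with figures showing the orbits. Your checklist (a)--(c), in particular the Jacobian test for isolation and the half-plane/oval check, is more careful than what the paper records, which is only numerical and graphical. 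But without concrete parameters it has nothing to act on.

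The fitting step is also less benign than you suggest, so it is not clear the search succeeds as described.
\begin{itemize}
\item \textbf{Degree.} The closing conditions are not linear or low-degree in the affine parameters. The $\mathtt{S}_3$ closing polynomial already has total degree $8$ in $(b_1,c_1,\beta_1,\gamma_1)$, so real solvability is not automatic.
\item \textbf{Evenness claim.} The $\mathtt{S}_3$ and $\mathtt{S}_4$ integrals are not rational in $x^2,y^2$ after an affine change.
\item \textbf{Obstruction on an $\mathtt{S}_1$ side.} Here $H_1(0,y)$ is a quotient of two quadratics. After dividing by $y_1-y_2$, the closing condition is $Ap+Bs+C=0$ with $s=y_1+y_2$, $p=y_1y_2$. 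Three prescribed pairs are therefore compatible only if the points $(s_k,p_k)$ are collinear. Otherwise $A=B=C=0$, so $H_1(0,\cdot)$ is constant. That forces $\Sigma$ onto one of the invariant lines $u-v=\pm1$ (in the canonical coordinates $(u,v)$ of $\mathtt{S}_1$) that bound the period annulus, and then no crossing orbit exists. Your symmetric targets $(k,-k)$ pass this test, since all $s_k=0$ and $H_1(0,\cdot)$ becomes even. A generic ``slightly asymmetric variant'' does not.
\item \textbf{Small-scaling fallback.} Making the orbits small perturbations of nested circles drives both sides toward the linear regime. There each closing condition degenerates to $y_1+y_2=\mathrm{const}$, and, as for $\mathtt{L}_c$--$\mathtt{L}_c$, there are no limit cycles. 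Cycles could then only come from a higher-order Melnikov-type expansion around a matched linear--linear annulus, which you do not set up.
\end{itemize}
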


Table~\ref{TableLCy} summarizes the results of Theorems~\ref{Thm1} and~\ref{Thm2}. The values in parentheses denote the maximal number of known crossing limit cycles for each class, and are realized by examples constructed in both theorems.

\begin{table}[h]
\caption{Upper bounds for the number of crossing limit cycles obtained in Theorem~\ref{Thm1} and \ref{Thm2}.}
\label{TableLCy}
\begin{tabular}{l | c | c | c | c | c }
\toprule
  & $\mathtt{L}_c$  & $\mathtt{S}_1$   & $\mathtt{S}_2$  &  $\mathtt{S}_3$ &  $\mathtt{S}_4$ \\
  \hline
  $\mathtt{L}_c$  & 0 & $1~(1)$  & $2~(2)$ & $\textcolor{blue}{3~(3)}$ & \textcolor{blue}{3~(3)} \\
    \hline
  $\mathtt{S}_1$   & $1~(1)$ & $1~(1)$  & $3~\textcolor{blue}{(3)}$ & $\textcolor{blue}{5~(3)}$ & $\textcolor{blue}{5~(3)}$ \\
   \hline
  $\mathtt{S}_2$   & $2~(2)$ & $3~\textcolor{blue}{(3)}$  & $\textcolor{blue}{8~(3)}$ & $\textcolor{blue}{13~(3)}$ & $\textcolor{blue}{13~(3)}$ \\
   \hline
  $\mathtt{S}_3$   & $\textcolor{blue}{3~(3)}$ & $\textcolor{blue}{5~(3)}$ & $\textcolor{blue}{13~(3)}$  & $\textcolor{blue}{~(3)}$  & $\textcolor{blue}{~(3)}$  \\
  \hline
  $\mathtt{S}_4$   & $\textcolor{blue}{3~(3)}$  & $\textcolor{blue}{5~(3)}$ & $\textcolor{blue}{13~(3)}$  & $\textcolor{blue}{ ~(3)}$  & $\textcolor{blue}{~(3)}$ \\
  \bottomrule
\end{tabular}

\end{table}

The values obtained in this work are highlighted in blue in Table~\ref{TableLCy}, while the remaining ones correspond to those reported in \cite{buzzi2022crossing}; see the corresponding table therein for comparison.

In Section~\ref{sec2:changevar}, we present the families of cubic isochronous centers after an affine change of variables and describe the expressions that will be used to derive the closing conditions. Section~\ref{sect:03} is devoted to the proof of Theorem~\ref{Thm1}, where we analyze all possible configurations and establish the corresponding upper bounds for the number of crossing limit cycles. In Section~\ref{sect:04}, we prove Theorem~\ref{Thm2} by constructing explicit systems exhibiting three crossing limit cycles.

\section{Cubic isochronous centers after an affine change of variables}
\label{sec2:changevar}
In this section, we give the expression of the cubic isochronous centers with homogeneous nonlinearities  ($\mathtt{S}_1$), ($\mathtt{S}_2$), ($\mathtt{S}_3$),($\mathtt{S}_4$), as well as to their first integrals after the general affine change of
variables 
 $(x, y) \rightarrow (a_1 x +b_1 y + c_1, \alpha_1 x + \beta_1 y + \gamma_1)$,
with $b_1 \alpha_1 - a_1 \beta_1\neq 0$. 

The differential system ($\mathtt{S}_1$) becomes

\begin{equation*}
\begin{aligned}
\dot{x}=& \frac{1}{b_1 \alpha_1 - a_1 \beta_1}
\Big( -\beta_1 c_1^3 + b_1 \gamma_1 c_1^2 + \beta_1 \gamma_1^2 c_1 + b_1 c_1 - b_1 \gamma_1^3 - x^3 (a_1^2 - \alpha_1^2)(a_1 \beta_1 - b_1 \alpha_1) \\
& + x y^2 (b_1 \alpha_1 - a_1 \beta_1)(b_1^2 - \beta_1^2) - 2 x^2 y (b_1 \beta_1 a_1^2 - \alpha_1 \beta_1^2 a_1 - a_1 b_1^2 \alpha_1 + b_1 \alpha_1^2 \beta_1) + \beta_1 \gamma_1 \\
& + y^2 (b_1^2 - \beta_1^2)(b_1 \gamma_1 - c_1 \beta_1) + x^2 (-3 c_1 \beta_1 a_1^2 + b_1 \gamma_1 a_1^2 + 2 b_1 c_1 \alpha_1 a_1 + 2 \alpha_1 \beta_1 \gamma_1 a_1 + c_1 \\
& \alpha_1^2 \beta_1 - 3 b_1 \alpha_1^2 \gamma_1) + 2 x y (c_1 \alpha_1 b_1^2 + a_1 \gamma_1 b_1^2 - 2 a_1 c_1 \beta_1 b_1 - 2 \alpha_1 \beta_1 \gamma_1 b_1 + c_1 \alpha_1 \beta_1^2 + a_1 \beta_1^2 \\
& \gamma_1) + x (b_1 \alpha_1 c_1^2 - 3 a_1 \beta_1 c_1^2 + 2 a_1 b_1 \gamma_1 c_1 + 2 \alpha_1 \beta_1 \gamma_1 c_1 - 3 b_1 \alpha_1 \gamma_1^2 + a_1 \beta_1 \gamma_1^2 + a_1 b_1 + \alpha_1 \\
& \beta_1) + y (2 c_1 \gamma_1 b_1^2 + b_1^2 - 2 \beta_1 \gamma_1^2 b_1 - 2 c_1^2 \beta_1 b_1 + \beta_1^2 + 2 c_1 \beta_1^2 \gamma_1) \Big),
\end{aligned}
\end{equation*}

\begin{equation}\label{syst:S1}
\begin{aligned}
\dot{y}=&\frac{1}{b_1 \alpha_1 - a_1 \beta_1}
\Big( -\alpha_1 c_1^3 + a_1 \gamma_1 c_1^2 + \alpha_1 \gamma_1^2 c_1 + a_1 c_1 - a_1 \gamma_1^3 + x^2 y (a_1^2 - \alpha_1^2)(a_1 \beta_1 - b_1 \alpha_1) \\
& - y^3 (b_1 \alpha_1 - a_1 \beta_1)(b_1^2 - \beta_1^2) + 2 x y^2 (b_1 \beta_1 a_1^2 - \alpha_1 \beta_1^2 a_1 - a_1 b_1^2 \alpha_1 + b_1 \alpha_1^2 \beta_1) + \alpha_1 \gamma_1 \\
& + x^2 (a_1^2 - \alpha_1^2)(a_1 \gamma_1 - c_1 \alpha_1) + 2 x y (c_1 \beta_1 a_1^2 + b_1 \gamma_1 a_1^2 - 2 b_1 c_1 \alpha_1 a_1 - 2 \alpha_1 \beta_1 \gamma_1 a_1 + c_1 \\
& \alpha_1^2 \beta_1 + b_1 \alpha_1^2 \gamma_1) + y^2 (-3 c_1 \alpha_1 b_1^2 + a_1 \gamma_1 b_1^2 + 2 a_1 c_1 \beta_1 b_1 + 2 \alpha_1 \beta_1 \gamma_1 b_1 + c_1 \alpha_1 \beta_1^2 - 3 a_1 \\
& \beta_1^2 \gamma_1) + x (2 c_1 \gamma_1 a_1^2 + a_1^2 - 2 \alpha_1 \gamma_1^2 a_1 - 2 c_1^2 \alpha_1 a_1 + \alpha_1^2 + 2 c_1 \alpha_1^2 \gamma_1) + y (-3 b_1 \alpha_1 c_1^2 + a_1 \\
& \beta_1 c_1^2 + 2 a_1 b_1 \gamma_1 c_1 + 2 \alpha_1 \beta_1 \gamma_1 c_1 + b_1 \alpha_1 \gamma_1^2 - 3 a_1 \beta_1 \gamma_1^2 + a_1 b_1 + \alpha_1 \beta_1) \Big),
\end{aligned}
\end{equation}
with the first integral 
\begin{equation*}\label{sistemamud}
\begin{aligned}
  H_1(x,y)
&=
\frac{(a_1 x + b_1 y + c_1)^2+(\alpha_1 x + \beta_1 y + \gamma_1)^2}
{1+2\,(a_1 x + b_1 y + c_1)(\alpha_1 x + \beta_1 y + \gamma_1)}.
\end{aligned}
\end{equation*}

The differential system ($\mathtt{S}_2$) becomes

\begin{equation*}
\begin{aligned}
\dot{x}=&\frac{1}{b_1 \alpha_1 - a_1 \beta_1}
\Big( -\beta_1 c_1^3 + 3 b_1 \gamma_1 c_1^2 + 3 \beta_1 \gamma_1^2 c_1 + b_1 c_1 - b_1 \gamma_1^3 + x^3 (-\beta_1 a_1^3 + 3 b_1 \alpha_1 a_1^2 + 3 \\
& \alpha_1^2 \beta_1 a_1 - b_1 \alpha_1^3) + 3 x y^2 (b_1 \alpha_1 + a_1 \beta_1)(b_1^2 + \beta_1^2) + 6 x^2 y (a_1 \alpha_1 b_1^2 + a_1 \alpha_1 \beta_1^2) + 2 y^3 (\beta_1 b_1^3 \\
& + \beta_1^3 b_1) + \beta_1 \gamma_1 + 6 x y (b_1^2 + \beta_1^2)(c_1 \alpha_1 + a_1 \gamma_1) + 3 y^2 (b_1^2 + \beta_1^2)(c_1 \beta_1 + b_1 \gamma_1) + y (b_1^2 \\
& + \beta_1^2)(6 c_1 \gamma_1 + 1) + 3 x^2 (-c_1 \beta_1 a_1^2 + b_1 \gamma_1 a_1^2 + 2 b_1 c_1 \alpha_1 a_1 + 2 \alpha_1 \beta_1 \gamma_1 a_1 + c_1 \alpha_1^2 \beta_1 - b_1 \\
& \alpha_1^2 \gamma_1) + x (3 b_1 \alpha_1 c_1^2 - 3 a_1 \beta_1 c_1^2 + 6 a_1 b_1 \gamma_1 c_1 + 6 \alpha_1 \beta_1 \gamma_1 c_1 - 3 b_1 \alpha_1 \gamma_1^2 + 3 a_1 \beta_1 \gamma_1^2 + a_1 b_1 \\
& + \alpha_1 \beta_1) \Big),
\end{aligned}
\end{equation*}

\begin{equation}\label{syst:S2}
\begin{aligned}
\dot{y}=&\frac{1}{b_1 \alpha_1 - a_1 \beta_1}
\Big( -\alpha_1 c_1^3 + 3 a_1 \gamma_1 c_1^2 + 3 \alpha_1 \gamma_1^2 c_1 + a_1 c_1 - a_1 \gamma_1^3 + 2 x^3 (\alpha_1 a_1^3 + \alpha_1^3 a_1) + 3 x^2 \\
& y (a_1^2 + \alpha_1^2)(b_1 \alpha_1 + a_1 \beta_1) + 6 x y^2 (b_1 \beta_1 a_1^2 + b_1 \alpha_1^2 \beta_1) + y^3 (3 a_1 \beta_1 b_1^2 + 3 \alpha_1 \beta_1^2 b_1 - a_1 \beta_1^3 \\
& - b_1^3 \alpha_1) + \alpha_1 \gamma_1 + 3 x^2 (a_1^2 + \alpha_1^2)(c_1 \alpha_1 + a_1 \gamma_1) + 6 x y (a_1^2 + \alpha_1^2)(c_1 \beta_1 + b_1 \gamma_1) + x (a_1^2 \\
& + \alpha_1^2)(6 c_1 \gamma_1 + 1) - 3 y^2 (c_1 \alpha_1 b_1^2 - a_1 \gamma_1 b_1^2 - 2 a_1 c_1 \beta_1 b_1 - 2 \alpha_1 \beta_1 \gamma_1 b_1 - c_1 \alpha_1 \beta_1^2 + a_1 \\
& \beta_1^2 \gamma_1) + y (-3 b_1 \alpha_1 c_1^2 + 3 a_1 \beta_1 c_1^2 + 6 a_1 b_1 \gamma_1 c_1 + 6 \alpha_1 \beta_1 \gamma_1 c_1 + 3 b_1 \alpha_1 \gamma_1^2 - 3 a_1 \beta_1 \gamma_1^2 + a_1 \\
& b_1 + \alpha_1 \beta_1) \Big),
\end{aligned}
\end{equation}

with the first integral 
\begin{equation*}\label{sistemamud}
\begin{aligned}
H_2(x,y)
&=
\frac{\big((a_1 x + b_1 y + c_1)^2+(\alpha_1 x + \beta_1 y + \gamma_1)^2\big)^2}
{1+4\,(a_1 x + b_1 y + c_1)(\alpha_1 x + \beta_1 y + \gamma_1)}.
\end{aligned}
\end{equation*}

The differential system ($\mathtt{S}_3$) becomes
\begin{equation*}
\begin{aligned}
\dot{x} =& \frac{1}{b_1 \alpha_1 - a_1 \beta_1} 
\Big( -2 b_1 c_1^3 - 3 \beta_1 \gamma_1 c_1^2 + 9 b_1 \gamma_1^2 c_1 + b_1 c_1 + x^3 (-2 b_1 a_1^3 - 3 \alpha_1 \beta_1 a_1^2 + 9 b_1 \alpha_1^2 a_1) \\
& - 3 x^2 y (2 a_1^2 b_1^2 - 3 \alpha_1^2 b_1^2 - 4 a_1 \alpha_1 \beta_1 b_1 + a_1^2 \beta_1^2) - 3 x y^2 (2 a_1 b_1^3 - 5 \alpha_1 \beta_1 b_1^2 - a_1 \beta_1^2 b_1) - 2 y^3 \\
& (b_1^4 - 3 b_1^2 \beta_1^2) + \beta_1 \gamma_1 - 3 x^2 (2 b_1 c_1 a_1^2 + \beta_1 \gamma_1 a_1^2 + 2 c_1 \alpha_1 \beta_1 a_1 - 6 b_1 \alpha_1 \gamma_1 a_1 - 3 b_1 c_1 \alpha_1^2) - 6 \\
& x y (2 a_1 c_1 b_1^2 - 3 \alpha_1 \gamma_1 b_1^2 - 2 c_1 \alpha_1 \beta_1 b_1 - 2 a_1 \beta_1 \gamma_1 b_1 + a_1 c_1 \beta_1^2) - 3 y^2 (2 c_1 b_1^3 - 5 \beta_1 \gamma_1 b_1^2 - c_1 \\
& \beta_1^2 b_1) + x (-6 a_1 b_1 c_1^2 - 3 \alpha_1 \beta_1 c_1^2 + 18 b_1 \alpha_1 \gamma_1 c_1 - 6 a_1 \beta_1 \gamma_1 c_1 + 9 a_1 b_1 \gamma_1^2 + a_1 b_1 + \alpha_1 \beta_1) \\
& + y (-6 c_1^2 b_1^2 + 9 \gamma_1^2 b_1^2 + b_1^2 + 12 c_1 \beta_1 \gamma_1 b_1 - 3 c_1^2 \beta_1^2 + \beta_1^2) \Big),
\end{aligned}
\end{equation*}

\begin{equation}\label{syst:S3}
\begin{aligned}
\dot{y} =& \frac{1}{b_1 \alpha_1 - a_1 \beta_1} 
\Big( 2 a_1 c_1^3 + 3 \alpha_1 \gamma_1 c_1^2 - 9 a_1 \gamma_1^2 c_1 - a_1 c_1 + 2 x^3 (a_1^4 - 3 a_1^2 \alpha_1^2) + 3 x^2 y (2 b_1 a_1^3 - 5 \\
& \alpha_1 \beta_1 a_1^2 - b_1 \alpha_1^2 a_1) + b_1 y^3 (2 a_1 b_1^2 + 3 \alpha_1 \beta_1 b_1 - 9 a_1 \beta_1^2) + 3 x y^2 (2 a_1^2 b_1^2 + \alpha_1^2 b_1^2 - 4 a_1 \alpha_1 \beta_1 b_1 \\
& - 3 a_1^2 \beta_1^2) - \alpha_1 \gamma_1 + 3 x^2 (2 c_1 a_1^3 - 5 \alpha_1 \gamma_1 a_1^2 - c_1 \alpha_1^2 a_1) + 6 x y (2 b_1 c_1 a_1^2 - 3 \beta_1 \gamma_1 a_1^2 - 2 c_1 \alpha_1 \\
& \beta_1 a_1 - 2 b_1 \alpha_1 \gamma_1 a_1 + b_1 c_1 \alpha_1^2) + 3 y^2 (2 a_1 c_1 b_1^2 + \alpha_1 \gamma_1 b_1^2 + 2 c_1 \alpha_1 \beta_1 b_1 - 6 a_1 \beta_1 \gamma_1 b_1 - 3 a_1 c_1 \\
& \beta_1^2) + x (6 c_1^2 a_1^2 - 9 \gamma_1^2 a_1^2 - a_1^2 - 12 c_1 \alpha_1 \gamma_1 a_1 + 3 c_1^2 \alpha_1^2 - \alpha_1^2) + y (6 a_1 b_1 c_1^2 + 3 \alpha_1 \beta_1 c_1^2 + 6 b_1 \\
& \alpha_1 \gamma_1 c_1 - 18 a_1 \beta_1 \gamma_1 c_1 - 9 a_1 b_1 \gamma_1^2 - a_1 b_1 - \alpha_1 \beta_1) \Big),
\end{aligned}
\end{equation}

with the first integral 
\begin{equation*}\label{sistemamud}
\begin{aligned}
 H_3(x,y)
&=
\frac{(a_1 x + b_1 y + c_1)^2+(\alpha_1 x + \beta_1 y + \gamma_1)^2
-4(a_1 x + b_1 y + c_1)^4+4(a_1 x + b_1 y + c_1)^6}
{\big(-1+3(a_1 x + b_1 y + c_1)^2\big)^3}.
\end{aligned}
\end{equation*}

The differential system ($\mathtt{S}_4$) becomes

\begin{equation*}
\begin{aligned}
\dot{x} =& \frac{1}{b_1 \alpha_1 - a_1 \beta_1} 
\Big( 2 b_1 c_1^3 + 3 \beta_1 \gamma_1 c_1^2 - 9 b_1 \gamma_1^2 c_1 + b_1 c_1 + a_1 x^3 (2 b_1 a_1^2 + 3 \alpha_1 \beta_1 a_1 - 9 b_1 \alpha_1^2) + 3 \\
& x^2 y (2 a_1^2 b_1^2 - 3 \alpha_1^2 b_1^2 - 4 a_1 \alpha_1 \beta_1 b_1 + a_1^2 \beta_1^2) + 3 x y^2 (2 a_1 b_1^3 - 5 \alpha_1 \beta_1 b_1^2 - a_1 \beta_1^2 b_1) + 2 y^3 (b_1^4 \\
& - 3 b_1^2 \beta_1^2) + \beta_1 \gamma_1 + 3 x^2 (2 b_1 c_1 a_1^2 + \beta_1 \gamma_1 a_1^2 + 2 c_1 \alpha_1 \beta_1 a_1 - 6 b_1 \alpha_1 \gamma_1 a_1 - 3 b_1 c_1 \alpha_1^2) + 6 x y \\
& (2 a_1 c_1 b_1^2 - 3 \alpha_1 \gamma_1 b_1^2 - 2 c_1 \alpha_1 \beta_1 b_1 - 2 a_1 \beta_1 \gamma_1 b_1 + a_1 c_1 \beta_1^2) + 3 y^2 (2 c_1 b_1^3 - 5 \beta_1 \gamma_1 b_1^2 - c_1 \beta_1^2 \\
& b_1) + x (6 a_1 b_1 c_1^2 + 3 \alpha_1 \beta_1 c_1^2 - 18 b_1 \alpha_1 \gamma_1 c_1 + 6 a_1 \beta_1 \gamma_1 c_1 - 9 a_1 b_1 \gamma_1^2 + a_1 b_1 + \alpha_1 \beta_1) + y (6 \\
& c_1^2 b_1^2 - 9 \gamma_1^2 b_1^2 + b_1^2 - 12 c_1 \beta_1 \gamma_1 b_1 + 3 c_1^2 \beta_1^2 + \beta_1^2) \Big), 
\end{aligned}
\end{equation*}
\begin{equation}\label{syst:S4}
\begin{aligned}
\dot{y}=& \frac{1}{b_1 \alpha_1 - a_1 \beta_1}
\Big( -2 a_1 c_1^3 - 3 \alpha_1 \gamma_1 c_1^2 + 9 a_1 \gamma_1^2 c_1 - a_1 c_1 - 2 x^3 (a_1^4 - 3 a_1^2 \alpha_1^2) - 3 x^2 y (2 b_1 a_1^3 \\
& - 5 \alpha_1 \beta_1 a_1^2 - b_1 \alpha_1^2 a_1) + y^3 (-2 a_1 b_1^3 - 3 \alpha_1 \beta_1 b_1^2 + 9 a_1 \beta_1^2 b_1) - 3 x y^2 (2 a_1^2 b_1^2 + \alpha_1^2 b_1^2 - 4 a_1 \\
& \alpha_1 \beta_1 b_1 - 3 a_1^2 \beta_1^2) - \alpha_1 \gamma_1 - 3 x^2 (2 c_1 a_1^3 - 5 \alpha_1 \gamma_1 a_1^2 - c_1 \alpha_1^2 a_1) - 6 x y (2 b_1 c_1 a_1^2 - 3 \beta_1 \gamma_1 a_1^2 \\
& - 2 c_1 \alpha_1 \beta_1 a_1 - 2 b_1 \alpha_1 \gamma_1 a_1 + b_1 c_1 \alpha_1^2) - 3 y^2 (2 a_1 c_1 b_1^2 + \alpha_1 \gamma_1 b_1^2 + 2 c_1 \alpha_1 \beta_1 b_1 - 6 a_1 \beta_1 \gamma_1 \\
& b_1 - 3 a_1 c_1 \beta_1^2) + x (-6 c_1^2 a_1^2 + 9 \gamma_1^2 a_1^2 - a_1^2 + 12 c_1 \alpha_1 \gamma_1 a_1 - 3 c_1^2 \alpha_1^2 - \alpha_1^2) + y (-6 a_1 b_1 c_1^2 \\
& - 3 \alpha_1 \beta_1 c_1^2 - 6 b_1 \alpha_1 \gamma_1 c_1 + 18 a_1 \beta_1 \gamma_1 c_1 + 9 a_1 b_1 \gamma_1^2 - a_1 b_1 - \alpha_1 \beta_1) \Big),
\end{aligned}
\end{equation}

with the first integral 
\begin{equation*}\label{sistemamud}
\begin{aligned}
H_4(x,y)
&=
\frac{(a_1 x + b_1 y + c_1)^2+(\alpha_1 x + \beta_1 y + \gamma_1)^2
+4(a_1 x + b_1 y + c_1)^4+4(a_1 x + b_1 y + c_1)^6}
{\big(1+3(a_1 x + b_1 y + c_1)^2\big)^3}.
\end{aligned}
\end{equation*}
\section{Proof of Theorem $\ref{Thm1}$}
\label{sect:03}

\subsection*{\textbf{Proof of Theorem \ref{Thm1} for systems  $\mathtt{L}_c- \mathtt{S}_3$.}}
We consider in $\Sigma^-$ a cubic system \eqref{syst:S3}, that is, the cubic system $\mathtt{S}_3$ after an arbitrary affine change of variables,  with its first
integral $H_3(x, y)$; and in $\Sigma^+$,  a linear differential system $\mathtt{L}_c$ \eqref{sistemalinear} with its first integral $H_L(x, y)$.

If such discontinuous piecewise differential systems have a limit cycle intersecting the discontinuity straight line $x = 0$ in the two points $(0, y_1 )$ and $(0, y_2)$, then $y_1$ and $y_2$ must satisfy that
\begin{equation}\label{sist_Hl-H3}
\begin{aligned}
    H_L(0,y_1) - H_L(0,y_2) = 0, \text{ and }
    H_3(0,y_1) - H_3(0,y_2) = 0,
\end{aligned}
\end{equation}
or equivalently
\begin{align*}
\mathscr{E}_1 =& - (y_1 - y_2) \Big( -4 A^2 y_1 - 4A^2 y_2 + 8B\,D - \omega^2 y_1 - \omega^2 y_2 \Big) = 0, \\
\mathscr{E}_2 =& \frac{(y_1 - y_2)\mathscr{P}_3(y_1,y_2) }{\left(-1 + 3(c_1 + b_1 y_1)^2\right)^3 \left(-1 + 3(c_1 + b_1 y_2)^2\right)^3} = 0,
\end{align*}
where
\begin{align*}
\mathscr{P}_3(y_1,y_2) =& 9 b_1^8 y_1^2 y_2^2 (y_1 + y_2) (y_1^2 + y_2^2) + 18 b_1^7 c_1 y_1 y_2 (y_1^4 + 4 y_1^3 y_2 + 4 y_1^2 y_2^2 + 4 y_1 y_2^3 + y_2^4) \\
& + (-1 + 3 c_1^2)^3 \beta_1 ((y_1 + y_2) \beta_1 + 2 \gamma_1) + 2 b_1 c_1 (1 - 3 c_1^2)^2 (-1 + 2 c_1^2 + 9 y_1 y_2 \\
& \beta_1^2 - 9 \gamma_1^2) + b_1^2 (-1 + 3 c_1^2) ((1 - 21 c_1^2 + 42 c_1^4) (y_1 + y_2) + 18 (1 - 15 c_1^2) y_1 y_2 \\
& \beta_1 \gamma_1 - 9 (-1 + 15 c_1^2) (y_1 + y_2) \gamma_1^2) + b_1^4 ((y_1 + y_2) (72 c_1^2 (-1 + 5 c_1^2) y_1 y_2 \\
& + (4 - 69 c_1^2 + 135 c_1^4) y_2^2 + (-1 + 15 c_1^2) y_1^2 (-4 + 9 c_1^2 - 27 y_2^2 \beta_1^2)) - 54 \\
& (-1 + 15 c_1^2) y_1 y_2 (y_1^2 + y_1 y_2 + y_2^2) \beta_1 \gamma_1 - 27 (-1 + 15 c_1^2) (y_1 + y_2) (y_1^2 + y_2^2) \\
& \gamma_1^2) + b_1^6 ((y_1 + y_2) (108 c_1^2 y_1^3 y_2 + 108 c_1^2 y_1 y_2^3 + (-4 + 9 c_1^2) y_2^4 + y_1^4 (-4 + 9 c_1^2 \\
& - 27 y_2^2 \beta_1^2) + y_1^2 y_2^2 (-13 + 144 c_1^2 - 27 y_2^2 \beta_1^2)) - 54 y_1 y_2 (y_1^4 + y_1^3 y_2 + y_1^2 y_2^2 \\
& + y_1 y_2^3 + y_2^4) \beta_1 \gamma_1 - 27 (y_1 + y_2) (y_1^2 - y_1 y_2 + y_2^2) (y_1^2 + y_1 y_2 + y_2^2) \gamma_1^2) + 6 \\
& b_1^5 c_1 (y_2^4 (-4 + 9 c_1^2 - 27 \gamma_1^2) + y_1^4 (-4 + 9 c_1^2 - 27 (y_2 \beta_1 + \gamma_1)^2) - y_1^2 y_2^2 (13 \\
& - 84 c_1^2 + 27 (y_2 \beta_1 + \gamma_1)^2) - y_1^3 y_2 (7 - 54 c_1^2 + 27 (y_2 \beta_1 + \gamma_1)^2) - y_1 y_2^3 \\
& (7 - 54 c_1^2 + 27 \gamma_1 (2 y_2 \beta_1 + \gamma_1))) + 4 b_1^3 c_1 ((-1 + 5 c_1^2) y_2^2 (-4 + 9 c_1^2 - 27 \gamma_1^2) \\
& + (-1 + 5 c_1^2) y_1^2 (-4 + 9 c_1^2 - 27 (y_2 \beta_1 + \gamma_1)^2) + y_1 y_2 (4 + 99 c_1^4 + 27 \gamma_1 \\
& (2 y_2 \beta_1 + \gamma_1) - c_1^2 (47 + 135 \gamma_1 (2 y_2 \beta_1 + \gamma_1)))),
\end{align*}
where $\mathscr{P}_3(y_1,y_2)$ is a polynomial of degree $7$. Since $\left(-1 + 3(c_1 + b_1 y_1)^2\right) \left(-1 + 3(c_1 + b_1 y_2)^2\right) \neq 0$ and $y_1 < y_2$, we can remove these terms to solve the system, and we get the equivalent system
\begin{equation}\label{simplif_sist}
\begin{aligned}
\mathscr{\tilde{E}}_1 (y_1,y_2) =& -4 A^2 y_1 - 4A^2 y_2 + 8B\,DD - \omega^2 y_1 - \omega^2 y_2 = 0, \\
\mathscr{\tilde{E}}_2 (y_1,y_2) =& \mathscr{P}_3(y_1,y_2) = 0.
\end{aligned}
\end{equation}
System \eqref{simplif_sist} may admit a continuum of solutions $(y_1,y_2)$; in such a case, the associated periodic orbits are not isolated and thus are not limit cycles. Hence, we assume that \eqref{simplif_sist} has finitely many solutions. From $\mathscr{\tilde{E}}_1(y_1,y_2)$ it follows that
\begin{equation}\label{sol_y1}
\begin{aligned}
y_1 =& \frac{8 B\,D - y_2(4 A^2 + \omega^2)}{4 A^2 + \omega^2},
\end{aligned}
\end{equation}
with $4 A^2 + \omega^2 \neq 0$ because $\omega > 0$. So if we substitute \eqref{sol_y1} in $\mathscr{\tilde{E}}_2 (y_1,y_2) = 0$, then we get a polynomial $p_3(y_2)$ of degree $6$ in the variable $y_2$, and $p_3(y_2)$ has at most six roots. Therefore the system \eqref{simplif_sist} has at most six solutions, and consequently, the discontinuous piecewise differential system can have at most three limit cycles.

Now we shall prove that the discontinuous piecewise differential system $\mathtt{L}_c-\eqref{syst:S3}$ separated by the straight line $\Sigma:x=0$, having three limit cycles. 
In $\Sigma^+$, we consider the linear differential center 
\begin{align}\label{sist1-HL-S3}
    \dot{x} = \frac{9}{10} x - \frac{9}{10} y -\frac{7}{10}, \quad \dot{y} = x - \frac{9}{10} y - \frac{1}{2}, 
\end{align} 
with the first integral
\begin{equation*}
H_L(x,y) = 4 \left(x - \frac{9}{10} y \right)^2 + 8 \left(\frac{7}{10} y - \frac{1}{2} x \right) + \frac{9}{25} y^2.
\end{equation*}
In $\Sigma^{-}$, we consider the cubic isochronous center of type \eqref{syst:S3}
\begin{equation}\label{sist2-HL-S3}
    \begin{aligned}
    \dot{x} =& 0.000316235 x^3+x^2 (-0.181944 y-0.358614)+x ((0.500161 y+6.23234) y+10.2014) \\
    & + y (y (3.70271 y+15.2649)+20.176)+8.20727, \\[4pt]
    \dot{y} =& 0.00013467 x^3+x^2 (0.00517374 y+0.00507782)+x ((-0.487029 y-1.98247) y - 1.98832) \\
    & + y (y (-1.67659 y-3.81676)+4.49615)+11.0264,    
   \end{aligned}
\end{equation}
with first integral    
\begin{equation*}
\begin{aligned}
H_3(x,y) &= \frac{1}{\left( - 1 + 3 \left(\frac{11}{245} x + 0.149717 y - \frac{25}{101}\right)^2 \right)^3} \Big( \left(\frac{11}{245} x + 0.149717 y - \frac{25}{101}\right)^2 + \\
& \left(\frac{1}{176} x - 1.2606 y - 2.47408\right)^2  - 4 \left(\frac{11}{245} x + 0.149717 y - \frac{25}{101}\right)^4 \\
& + 4 \left(\frac{11}{245} x + 0.149717 y - \frac{25}{101}\right)^6 \Big).
\end{aligned}
\end{equation*}

Solving system~\eqref{sist_Hl-H3} for $y_1 < y_2$, we obtain three pairs of real solutions $(p_i,q_i)$, where $p_i=(0,x_i)$ and $q_i=(0,y_i)$ for $i=1,\dots,3$, with $x_i<y_i$, given by

\begin{align*}
p_1 =& (0, -1.53678 \ldots), \quad q_1=(0, -0.0187778 \ldots), \\
p_2 =& (0, -1.40003 \ldots), \quad q_2=(0, -0.15553 \ldots), \\
p_3 =& (0, -1.17704 \ldots), \quad q_3=(0, -0.378511 \ldots),
\end{align*}
that provide the crossing limit cycles of discontinuous piecewise differential system \eqref{sist1-HL-S3}–\eqref{sist2-HL-S3} shown in Figure \ref{fig-HL-S3}. 

\begin{figure}
\centering
\includegraphics[scale=0.30]{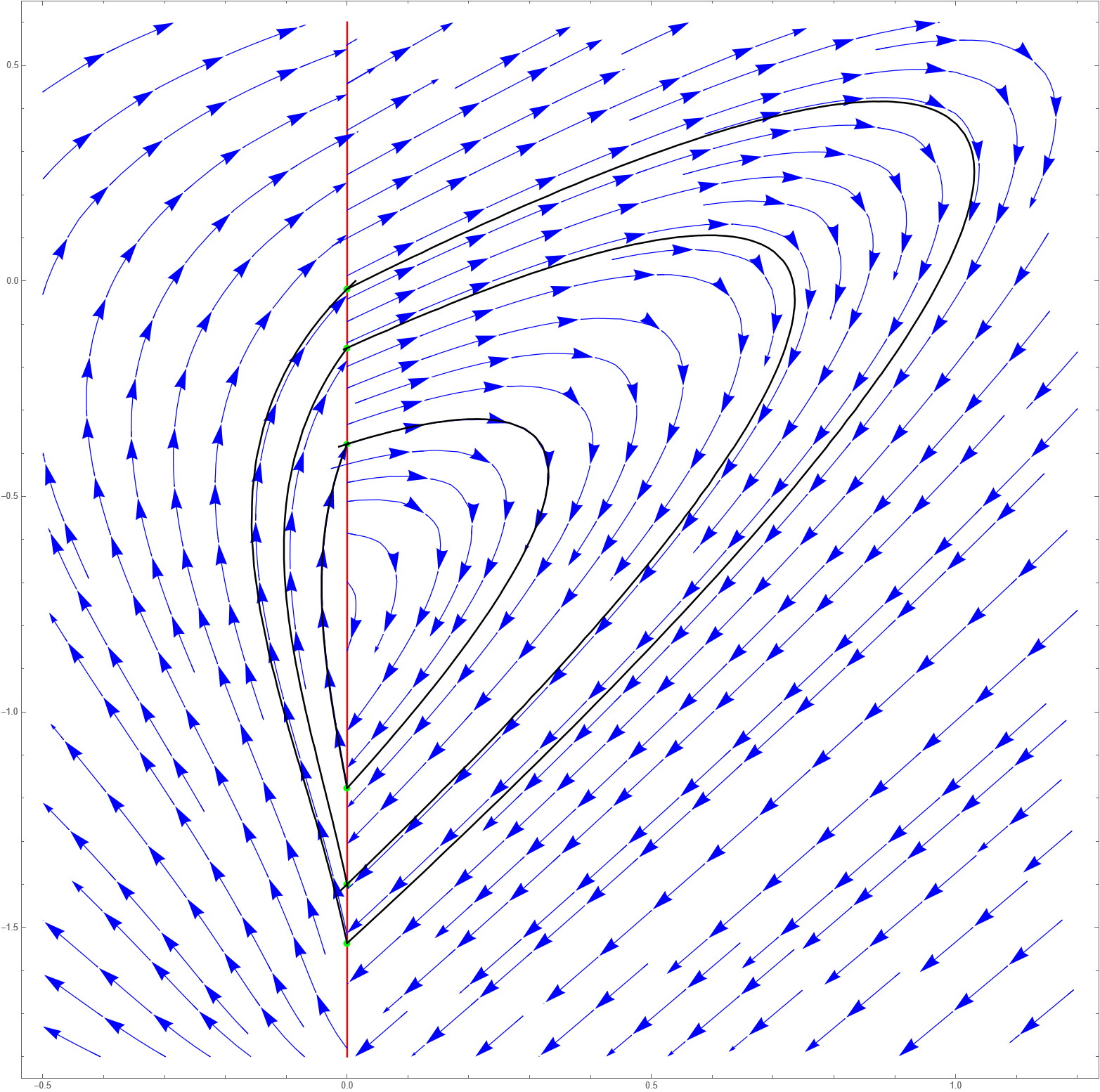}
\caption{The three limit cycle of the discontinuous piecewise differential system \eqref{sist1-HL-S3}-\eqref{sist2-HL-S3} of Theorem \ref{Thm1}.}
\label{fig-HL-S3}
\end{figure}

\subsection*{\textbf{Proof of Theorem \ref{Thm1} for systems  $\mathtt{L}_c- \mathtt{S}_4$}}
We consider in $\Sigma^-$ a cubic system \eqref{syst:S4}, that is, the cubic system $\mathtt{S}_4$ after an arbitrary affine change of variables,  with its first
integral $H_4(x, y)$; and in $\Sigma^+$,  a linear differential system $\mathtt{L}_c$ with its first integral $H_L(x, y)$.

If the discontinuous piecewise differential system $\mathtt{L}_c$-$\eqref{syst:S4}$  admits a limit cycle that intersects the discontinuity straight line  $x=0$ at two points, $(x,0)$ and $(0, y)$, then these two points must satisfy the following system of equations
\begin{align}\label{sist--Hs-H4}
    H_L(0,y_1) - H_L(0,y_2) = 0, \text{ and } 
    H_4(0,y_1) - H_4(0,y_2)  = 0,
\end{align}
or equivalently
\begin{equation*}
    \begin{aligned}
    \mathscr{E}_1(y_1,y_2)=& - (y_1 - y_2)\left(8 B D^2 - 4 A^2 (y_1 + y_2) - \Omega^2 (y_1 + y_2)\right)=0,\\
     \mathscr{E}_2(y_1,y_2)=& \frac{ (y_1 - y_2)\mathscr{P}_4(y_1,y_2) }{\left(1 + 3 (c_1 + b_1 y_1)^2\right)^3 \left(1 + 3 (c_1 + b_1 y_2)^2\right)^3}=0,
    \end{aligned}
\end{equation*}

where  $\mathscr{P}_4(y_1,y_2)$ is a polynomial of degree $7$, given by

\begin{equation*}
\begin{aligned}
\mathscr{P}_4(y_1,y_2)=& 9 b_1^8 y_1^2 y_2^2 (y_1+y_2) (y_1^2+y_2^2) + 18 b_1^7 c_1 y_1 y_2 (y_1^4+4 y_1^3 y_2+4 y_1^2 y_2^2+4 y_1 y_2^3+y_2^4) \\
& + b_1^6 (y_1^5 (9 c_1^2-27 (\gamma_1+\beta_1 y_2)^2+4)+y_1^4 y_2 (117 c_1^2-27 (\gamma_1+\beta_1 y_2)^2+4) \\
& + y_1^3 y_2^2 (252 c_1^2-27 (\gamma_1+\beta_1 y_2)^2+13)+y_1^2 y_2^3 (252 c_1^2-27 (\gamma_1+\beta_1 y_2)^2+13) \\
& +y_1 y_2^4 (117 c_1^2-27 \gamma_1 (\gamma_1+2 \beta_1 y_2)+4)+y_2^5 (9 c_1^2-27 \gamma_1^2+4)) + 6 b_1^5 c_1 (y_1^4 (9 \\
& c_1^2-27 (\gamma_1+\beta_1 y_2)^2+4)+y_1^3 y_2 (54 c_1^2-27 (\gamma_1+\beta_1 y_2)^2+7)+y_1^2 y_2^2 (84 c_1^2-27 \\
& (\gamma_1+\beta_1 y_2)^2+13)+y_1 y_2^3 (54 c_1^2-27 \gamma_1 (\gamma_1+2 \beta_1 y_2)+7)+y_2^4 (9 c_1^2-27 \gamma_1^2+4)) \\
& + b_1^4 (-54 \beta_1 (15 c_1^2+1) \gamma_1 y_1 y_2 (y_1^2+y_1 y_2+y_2^2) - 27 (15 c_1^2+1) \gamma_1^2 (y_1+y_2) \\
& (y_1^2+y_2^2) + (y_1+y_2) ((15 c_1^2+1) y_1^2 (9 c_1^2-27 \beta_1^2 y_2^2+4) + 72 (5 c_1^2+1) c_1^2 y_1 y_2 \\
& + (135 c_1^4+69 c_1^2+4) y_2^2)) + 4 b_1^3 c_1 ((5 c_1^2+1) y_1^2 (9 c_1^2-27 (\gamma_1+\beta_1 y_2)^2+4) \\
& + (5 c_1^2+1) y_2^2 (9 c_1^2-27 \gamma_1^2+4) + y_1 y_2 (99 c_1^4 + c_1^2 (47-135 \gamma_1 (\gamma_1+2 \beta_1 y_2)) \\
& - 27 \gamma_1 (\gamma_1+2 \beta_1 y_2) + 4)) + b_1^2 (-18 \beta_1 (45 c_1^4+18 c_1^2+1) \gamma_1 y_1 y_2 - 9 (45 c_1^4 \\
& +18 c_1^2+1) \gamma_1^2 (y_1+y_2) + (3 c_1^2+1)(42 c_1^4+21 c_1^2+1)(y_1+y_2)) + 2 b_1 c_1 \\
& (3 c_1^2+1)^2 (2 c_1^2-9 \gamma_1^2+9 \beta_1^2 y_1 y_2+1) + \beta_1 (3 c_1^2+1)^3 (2 \gamma_1+\beta_1 (y_1+y_2)).
\end{aligned}
\end{equation*}

Since 
$\left(1 + 3 (c_1 + b_1 y_1)^2\right)^3 \left(1 + 3 (c_1 + b_1 y_2)^2\right)^3 \neq 0$ and $y_1<y_2$, we reduce to equivalent systems 
\begin{equation}\label{my0}
    \begin{aligned}
    \widetilde{\mathscr{E}}(y_1,y_2)= & 8 B D^2 - 4 A^2 (y_1 + y_2) - \Omega^2 (y_1 + y_2)=0,\\
    \widetilde{\mathscr{E}}_2(y_1,y_2)=&\mathscr{P}_4(y_1,y_2)=0.
    \end{aligned}
\end{equation}

As in the previous proof, we assume that this system has finitely many solutions.  From $\widetilde{\mathscr{E}}(y_1,y_2)=0$ we obtain that
\begin{equation}\label{my1}
    \begin{aligned}
        y_1=\frac{8BD^2-4 A^2 y_2 - y_2 \omega^2}{4 A^2+\omega^2}.
    \end{aligned}
\end{equation}

So, if we substitute \eqref{my1} into $\widetilde{\mathscr{E}}_2(y_1,y_2)=0$, we obtain a polynomial $\widetilde{\mathscr{P}}(y_2)$ of degree $6$ in the variable $y_2$, and hence at most six real roots. Therefore, the system has at most six real solutions, and consequently, the discontinuous differential system can have at most three limit cycles.
Now we shall prove that the discontinuous piecewise differential system $\mathtt{L}_c-\eqref{syst:S4}$ separated by the straight line $\Sigma:x=0$, having three limit cycles. 
In $\Sigma^+$, we consider the linear differential center 
\begin{align}\label{sist1-HL-S4}
    \dot{x} = \frac{4}{5}x - \frac{89}{90}y + \frac{3}{5} , \quad \dot{y} = 
 \frac{9}{10}x - \frac{4}{5}y+1,
\end{align} 
with the first integral
\begin{equation*}
H_L(x,y) = 4 \left( \frac{9}{10} x - \frac{4}{5} y \right)^2 + \frac{36}{5} \left( x - \frac{3}{5} y \right) + y^2.
\end{equation*}
In $\Sigma^{-}$, we consider the cubic isochronous center of type \eqref{syst:S4}
\begin{equation}\label{sist2-HL-S4}
    \begin{aligned}
    \dot{x} =& 1.97178 x^3 + x^2 (12.1323 y-5.91966) + x (y (23.8596 y-26.2261)+11.3837) + \\
    & y (y (15.065 y-27.4285)+24.7067) - 8.25811, \\[4pt]
    \dot{y} =& x^2 (3.35082\, -6.56751 y) + x (y (14.5127\, -12.7494 y)-6.44068) + y (y (14.9345\, \\
    & -7.95242 y)-13.595)-1.08392 x^3+4.77085,
\end{aligned}
\end{equation}
with first integral    
\begin{equation*}
\begin{aligned}
H_4(x,y) &= \frac{1}{\left( 1 + 3 \left( \frac{5}{12} x + 0.789303 y - \frac{91}{190} \right)^2 \right)^3} \Bigg( \left(\frac{3}{46} x - 0.00012165 y - 0.192376 \right)^2 + \\
&\quad \left(\frac{5}{12} x + 0.789303 y -\frac{91}{190} \right)^2  + 4 \left(\frac{5}{12} x + 0.789303 y -\frac{91}{190}\right)^4 + \\
&\quad 4 \left(\frac{5}{12} x + 0.789303 y - \frac{91}{190}\right)^6 \Bigg).
\end{aligned}
\end{equation*}

Solving system~\eqref{sist--Hs-H4} for $y_1 < y_2$, we obtain three pairs of real solutions $(p_i,q_i)$, where $p_i=(0,x_i)$ and $q_i=(0,y_i)$ for $i=1,\dots,3$, with $x_i<y_i$, given by

\begin{align*}
p_1 =& (0, 5.52401 \ldots), \quad q_1=(0, 6.7375 \ldots), \\
p_2 =& (0, -1.51668 \ldots), \quad q_2=(0, 2.73016 \ldots), \\
p_3 =& (0, -0.632352 \ldots), \quad q_3=(0, 1.84584 \ldots),
\end{align*}
that provide the crossing limit cycles of discontinuous piecewise differential system \eqref{sist1-HL-S4}–\eqref{sist2-HL-S4} shown in Figure \ref{fig-HL-S4}. 

\begin{figure}
\centering
\includegraphics[scale=0.30]{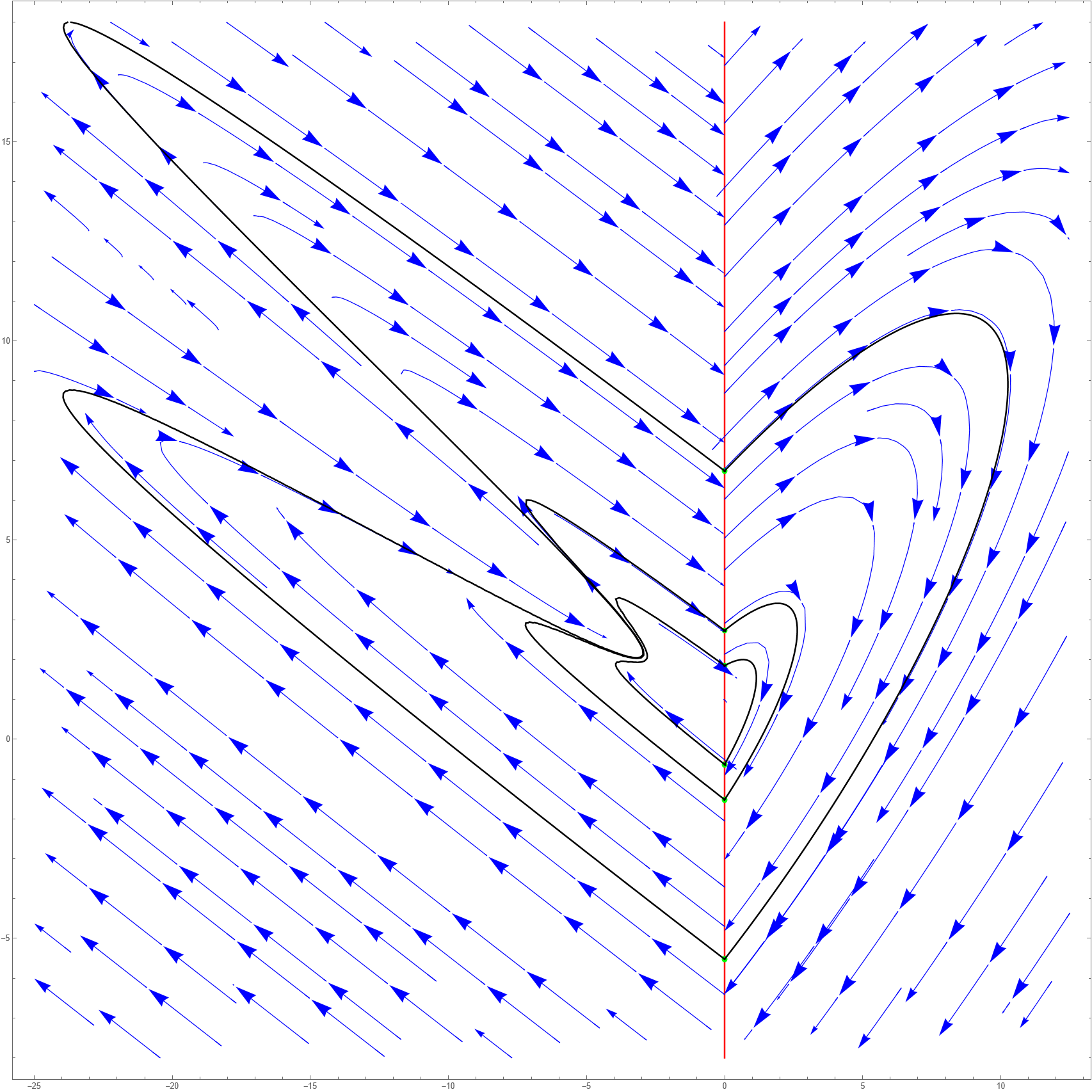}
\caption{The three limit cycle of the discontinuous piecewise differential system \eqref{sist1-HL-S4}-\eqref{sist2-HL-S4} of Theorem \ref{Thm1}.}
\label{fig-HL-S4}
\end{figure}

\subsection*{\textbf{Proof of Theorem \ref{Thm1} for systems  $\mathtt{S}_1- \mathtt{S}_3$}}
We consider in $\Sigma^-$ a cubic system \eqref{syst:S1}, that is, the cubic system $\mathtt{S}_1$ after an arbitrary affine change of variables,  with its first
integral 
$$H_1(x, y)=
\frac{(a x + b y + c)^2+(\alpha x + \beta y + \gamma)^2}
{1+2\,(a x + b y + c)(\alpha x + \beta y + \gamma)},$$ and in $\Sigma^+$,  a cubic system \eqref{syst:S3}, that is, the cubic system $\mathtt{S}_3$ after an arbitrary affine change of variables,  with its first
integral 
$$H_3(x, y)=
\frac{(a_1 x + b_1 y + c_1)^2+(\alpha_1 x + \beta_1 y + \gamma_1)^2
-4(a_1 x + b_1 y + c_1)^4+4(a_1 x + b_1 y + c_1)^6}
{\big(-1+3(a_1 x + b_1 y + c_1)^2\big)^3}.$$

If such discontinuous piecewise differential systems have a limit cycle intersecting the discontinuity straight line $x = 0$ in the two points $(0, y_1 )$ and $(0, y_2)$, then $y_1$ and $y_2$ must satisfy that
\begin{equation}\label{sist_H1-H3}
\begin{aligned}
    H_1(0,y_1) - H_1(0,y_2) &= 0, \text{ and }
    H_3(0,y_1) - H_3(0,y_2) &= 0,
\end{aligned}
\end{equation}
or equivalently
\begin{align*}
\mathscr{E}_2 =& -\frac{(y_1 - y_2) \mathscr{R}_1(y_1,y_2)}{\left(2 b \beta  y_1^2+2 b \gamma  y_1+2 c \gamma +2 \beta  c y_1+1\right) \left(2 b \beta  y_2^2+2 b \gamma  y_2+2 c \gamma +2 \beta  c y_2+1\right)} = 0, \\
\mathscr{E}_3 =& -\frac{(y_1 - y_2) \mathscr{R}_3(y_1,y_2)}{\left(3 b_1^2 y_1^2+6 b_1 c_1 y_1+3 c_1^2-1\right)^3 \left(3 b_1^2 y_2^2+6 b_1 c_1 y_2+3 c_1^2-1\right)^3} = 0,
\end{align*}
where
\begin{align*}
\mathscr{R}_1(y_1,y_2) =& -2 b^3 \gamma  y_1 y_2-2 b^2 c \gamma  y_1+2 \beta  b^2 c y_1 y_2-2 b^2 c \gamma  y_2-b^2 y_1-b^2 y_2+2 b \gamma ^3 \\
& -2 b c^2 \gamma +2 \beta  b c^2 y_1+2 \beta  b c^2 y_2-2 b c-2 \beta  \gamma +2 \beta  b \gamma ^2 y_1 +2 \beta ^2 b \gamma  y_1 y_2 \\
& +2 \beta  b \gamma ^2 y_2+2 \beta  c^3-2 \beta  c \gamma ^2-2 \beta ^2 c \gamma  y_1-2 \beta ^3 c y_1 y_2-2 \beta ^2 c \gamma  y_2-\beta ^2  y_1 \\
&-\beta ^2 y_2 ,
\end{align*}

\begin{align*}
\mathscr{R}_3(y_1,y_2) =& 9 b_1^8 y_1^2 y_2^2 (y_1+y_2) (y_1^2+y_2^2) + 18 b_1^7 c_1 y_1 y_2 (y_1^4+4 y_1^3 y_2+4 y_1^2 y_2^2+4 y_1 y_2^3+y_2^4) \\
& + b_1^6 ((y_1+y_2) (y_1^4 (9 c_1^2-27 \beta_1^2 y_2^2-4) + 108 c_1^2 y_1^3 y_2 + y_1^2 y_2^2 (144 c_1^2-27 \beta_1^2 y_2^2 \\
& -13) + 108 c_1^2 y_1 y_2^3 + (9 c_1^2-4) y_2^4) - 27 \gamma_1^2 (y_1+y_2) (y_1^2-y_1 y_2+y_2^2) (y_1^2+ \\
& y_1 y_2+y_2^2) - 54 \beta_1 \gamma_1 y_1 y_2 (y_1^4+y_1^3 y_2+y_1^2 y_2^2+y_1 y_2^3+y_2^4)) + 6 b_1^5 c_1 (y_1^4 (9 c_1^2 \\
& -27 (\gamma_1+\beta_1 y_2)^2-4) - y_1^3 y_2 (-54 c_1^2+27 (\gamma_1+\beta_1 y_2)^2+7) - y_1^2 y_2^2 (-84 \\
& c_1^2+27 (\gamma_1+\beta_1 y_2)^2+13) - y_1 y_2^3 (-54 c_1^2+27 \gamma_1 (\gamma_1+2 \beta_1 y_2)+7) + y_2^4 (9 \\
& c_1^2-27 \gamma_1^2-4)) + b_1^4 (-54 \beta_1 (15 c_1^2-1) \gamma_1 y_1 y_2 (y_1^2+y_1 y_2+y_2^2) - 27 (15 c_1^2 \\
& -1) \gamma_1^2 (y_1+y_2) (y_1^2+y_2^2) + (y_1+y_2) ((15 c_1^2-1) y_1^2 (9 c_1^2-27 \beta_1^2 y_2^2-4) + \\
& 72 (5 c_1^2-1) c_1^2 y_1 y_2 + (135 c_1^4-69 c_1^2+4) y_2^2)) + 4 b_1^3 c_1 ((5 c_1^2-1) y_1^2 (9 c_1^2-27 \\
& (\gamma_1+\beta_1 y_2)^2-4) + (5 c_1^2-1) y_2^2 (9 c_1^2-27 \gamma_1^2-4) + y_1 y_2 (99 c_1^4 - c_1^2 (135 \gamma_1 \\
& (\gamma_1+2 \beta_1 y_2)+47) + 27 \gamma_1 (\gamma_1+2 \beta_1 y_2) + 4)) + b_1^2 (3 c_1^2-1) (18 \beta_1 (1-15 c_1^2) \\
& \gamma_1 y_1 y_2 - 9 (15 c_1^2-1) \gamma_1^2 (y_1+y_2) + (42 c_1^4-21 c_1^2+1) (y_1+y_2)) + 2 b_1 c_1 \\
& (1-3 c_1^2)^2 (2 c_1^2-9 \gamma_1^2+9 \beta_1^2 y_1 y_2-1) + \beta_1 (3 c_1^2-1)^3 (2 \gamma_1+\beta_1 (y_1+y_2)),
\end{align*}
where $\mathscr{R}_1(y_1,y_2)$ and $\mathscr{R}_3(y_1,y_2)$ are polynomial of degree $2$ and $7$ respectability. Since 
$$\left(2 b \beta y_1^2 + 2 b \gamma y_1 + 2 c \gamma + 2 \beta c y_1 + 1\right)\left(2 b \beta y_2^2 + 2 b \gamma y_2 + 2 c \gamma + 2 \beta c y_2 + 1\right) \neq 0,$$
$$\left(3 (b_1 y_1 + c_1)^2 - 1\right)^3 \left(3 (b_1 y_2 + c_1)^2 - 1\right)^3 \neq 0,$$ 
and $y_1 < y_2$, we can remove these terms to solve the system, and we get the equivalent system
\begin{equation}\label{simplif_sist_h1-h3}
\begin{aligned}
\mathscr{\tilde{E}}_2 (y_1,y_2) =& \mathscr{R}_1(y_1,y_2) = 0, \\
\mathscr{\tilde{E}}_3 (y_1,y_2) =& \mathscr{R}_3(y_1,y_2) = 0.
\end{aligned}
\end{equation}
Eventually, system \eqref{sist_H1-H3} could have a continuum of solutions $(y_1,y_2)$, but then the possible periodic solutions would not be limit cycles. Therefore, we assume that this system has finitely many solutions. From $\mathscr{\tilde{E}}_1 (y_1,y_2)$ we obtain that
\begin{equation}\label{Sol_y1}
\begin{aligned}
y_1 =& - \frac{\mathscr{P}_{13}(y_2)}{\mathscr{Q}_{13}(y_2)},
\end{aligned}
\end{equation}
where
\begin{equation*}
\begin{aligned}
\mathscr{P}_{13}(y_2) =& b^2 \big(y_2 + 2 c y_2 \gamma\big) + \beta \big(-2 c^3 + y_2 \beta + 2 \gamma + 2 c \gamma (y_2 \beta + \gamma)\big) - 2 b \big(-c + c^2 (y_2 \beta - \gamma) \\
& + \gamma^2 (y_2 \beta + \gamma)\big), \\
\mathscr{Q}_{13}(y_2) =& 2 b^3 y_2 \gamma + b^2 \big(1 + 2 c (-y_2 \beta + \gamma)\big) + \beta^2 \big(1 + 2 c (y_2 \beta + \gamma)\big) - 2 b \beta \big(c^2 + \gamma (y_2 \beta + \gamma)\big),
\end{aligned}
\end{equation*}
with $\mathscr{Q}_{13}(y_2)\neq 0$. So if we substitute \eqref{Sol_y1} in $\mathscr{\tilde{E}}_2 (y_1,y_2) = 0$, then we get a polynomial $\mathscr{P}_3(y_2)$ of degree $10$ in the variable $y_2$, and $\mathscr{P}_3(y_2)$ has at most ten roots. Therefore the system \eqref{simplif_sist_h1-h3} has at most five solutions, and consequently, the discontinuous piecewise differential system can have at most five limit cycles.

Now we shall prove that the discontinuous piecewise differential system $\eqref{syst:S1}-\eqref{syst:S3}$ separated by the straight line $\Sigma:x=0$, having three limit cycles. 
In $\Sigma^+$, we consider the cubic isochronous center of type \eqref{syst:S1}
\begin{equation}\label{sist1-S1-S3}
    \begin{aligned}
    \dot{x} =& \frac{1}{900}\left(48(9x-14)y^2 - 144xy - 3x(27x(3x-14)+184) + 8(278y-163)\right), \\[4pt]
    \dot{y} =& \frac{1}{600}\left(27x^2(1-6y) + 6x(84y-139) + 16y(9y(2y-1)-73) + 1364\right),
\end{aligned}
\end{equation}
with the first integral
\begin{equation*}
\begin{aligned}
  H_1(x,y) = \frac{\left(\frac{3}{5}  + \frac{2}{5} y - 1\right)^2 + \left(\frac{3}{10} x + \frac{4}{5} y - \frac{3}{5}\right)^2}{1 + 2\left(\frac{3}{5} x + \frac{2}{5} y - 1\right)\left(\frac{3}{10} x + \frac{4}{5} y - \frac{3}{5}\right)}.
\end{aligned}
\end{equation*}
In $\Sigma^{-}$, we consider the cubic isochronous center of type \eqref{syst:S3}
\begin{equation}\label{sist2-S1-S3}
    \begin{aligned}
    \dot{x} =& \frac{1}{1100} \big( 756x^3 - 6x^2(67y+726) + x(1837 - 12y(411y+781)) \\
    & + y(18y(110-111y) + 8129) + 1015 \big), \\[4pt]
    \dot{y} =& \frac{1}{1100} \big( 142x^3 + 6x^2(62y-55) - x(114y(15y+44) + 1441) \\
    & + y(803 - 342y(6y+11)) + 705 \big),  
   \end{aligned}
\end{equation}
with first integral    
\begin{equation*}
    \begin{aligned}
H_3(x,y) =& \frac{ \left(-\frac{1}{5} x + \frac{3}{5} y + 1\right)^2 + \left(-\frac{1}{2} x - \frac{7}{10} y + \frac{1}{10}\right)^2 - 4\left(-\frac{1}{5} x + \frac{3}{5} y + 1\right)^4 + 4\left(-\frac{1}{5} x + \frac{3}{5} y + 1\right)^6 }{\left(- 1 + 3\left(-\frac{1}{5} x + \frac{3}{5} y + 1\right)^2 \right)^3}.
   \end{aligned}
\end{equation*}

Solving system~\eqref{sist_H1-H3} for $y_1 < y_2$, we obtain three pairs of real solutions $(p_i,q_i)$, where $p_i=(0,x_i)$ and $q_i=(0,y_i)$ for $i=1,\dots,3$, with $x_i<y_i$, given by
\begin{align*}
p_1 =& (0, -0.272336 \ldots), \quad q_1=(0, 1.24079 \ldots), \\
p_2 =& (0, 1.75938 \ldots), \quad q_2=(0, 9.2799 \ldots), \\
p_3 =& (0, 1.92051 \ldots), \quad q_3=(0, 4.65668 \ldots),
\end{align*}
that provide the crossing limit cycles of discontinuous piecewise differential system \eqref{sist1-S1-S3}–\eqref{sist2-S1-S3} shown in Figure \ref{fig-S1-S3}. 

\begin{figure}
\centering
\includegraphics[scale=0.30]{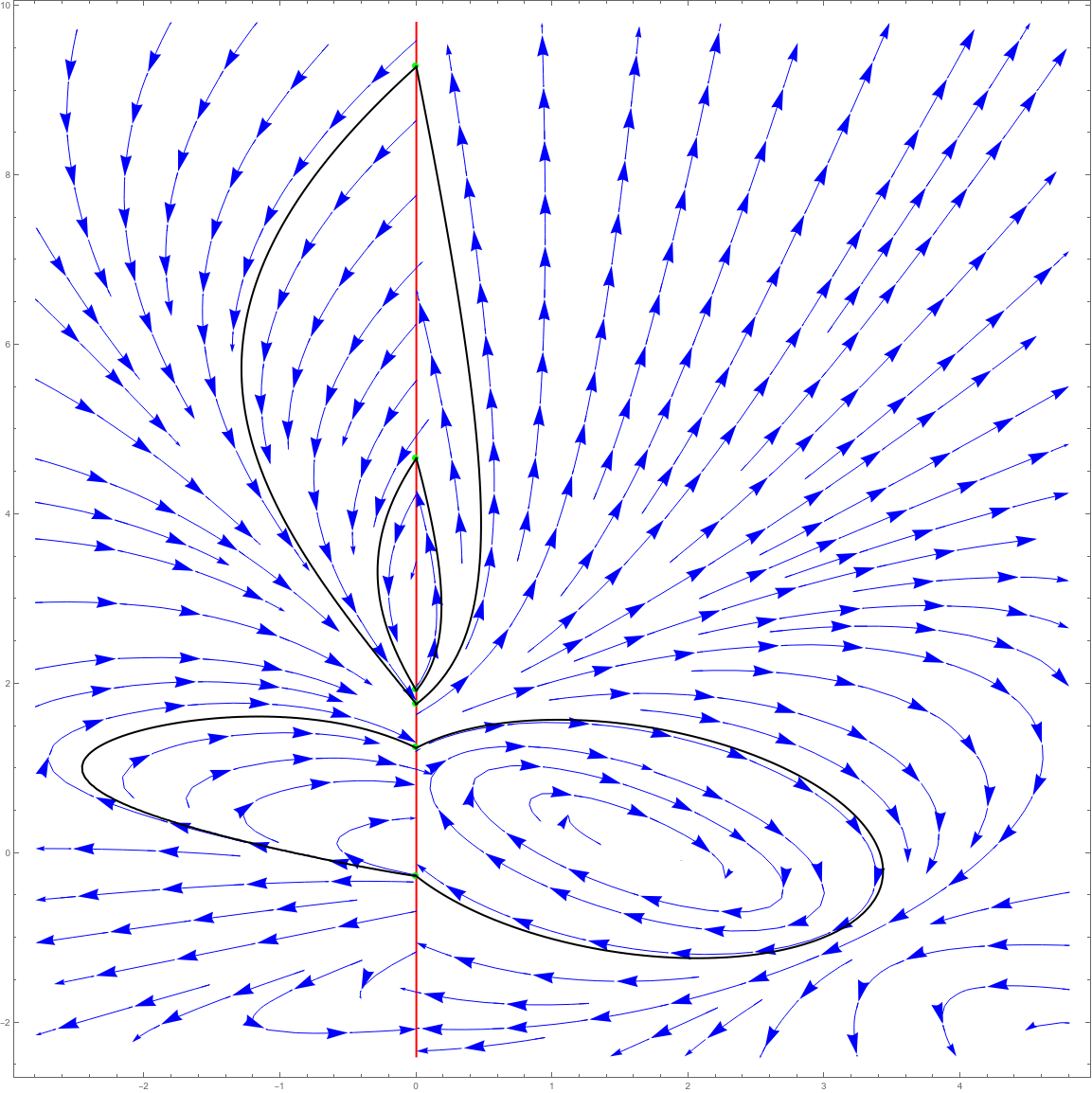}
\caption{The three limit cycle of the discontinuous piecewise differential system \eqref{sist1-S1-S3}-\eqref{sist2-S1-S3} of Theorem \ref{Thm1}.}
\label{fig-S1-S3} 
\end{figure}

\subsection*{\textbf{Proof of Theorem \ref{Thm1} for systems  $\mathtt{S}_1- \mathtt{S}_4$.}}
We consider in $\Sigma^-$ a cubic system \eqref{syst:S1}, that is, the cubic system $\mathtt{S}_1$ after an arbitrary affine change of variables,  with its first
integral $$H_1(x, y)=
\frac{(a x + b y + c)^2+(\alpha x + \beta y + \gamma)^2}
{1+2\,(a x + b y + c)(\alpha x + \beta y + \gamma)},$$ 
and in $\Sigma^+$,  a cubic system \eqref{syst:S4}, that is, the cubic system $\mathtt{S}_4$ after an arbitrary affine change of variables,  with its first
integral $$H_4(x, y)=
\frac{(a_1 x + b_1 y + c_1)^2+(\alpha_1 x + \beta_1 y + \gamma_1)^2
+4(a_1 x + b_1 y + c_1)^4+4(a_1 x + b_1 y + c_1)^6}
{\big(1+3(a_1 x + b_1 y + c_1)^2\big)^3}.$$

If the discontinuous piecewise differential system $H_1$-$\eqref{syst:S4}$  admits a limit cycle that intersects the discontinuity straight line  $x=0$ at two points, $(x,0)$ and $(0, y)$, then these two points must satisfy the following system of equations
\begin{align}\label{sist--H1-H4}
    H_1(0,y_1) - H_1(0,y_2) = 0, \text{ and } 
    H_4(0,y_1) - H_4(0,y_2)  = 0,
\end{align}

or equivalently
\begin{equation*}
    \begin{aligned}
    \mathscr{E}_1(y_1,y_2)=&-\frac{(y_1 - y_2) \mathscr{R}_1(y_1,y_2)}{\left(2 b \beta  y_1^2+2 b \gamma  y_1+2 c \gamma +2 \beta  c y_1+1\right) \left(2 b \beta  y_2^2+2 b \gamma  y_2+2 c \gamma +2 \beta  c y_2+1\right)} =0,\\
     \mathscr{E}_2(y_1,y_2)=& \frac{ (y_1 - y_2)\mathscr{R}_4(y_1,y_2) }{\left(1 + 3c^2 + 6bc\,y_1 + 3b^2 y_1^2\right)^3
\left(1 + 3c^2 + 6bc\,y_2 + 3b^2 y_2^2\right)^3}=0,
    \end{aligned}
\end{equation*}
where  
\begin{equation*}
\begin{aligned}
\mathscr{R}_4(y_1,y_2)=&-2 b^3 y_1 y_2 \gamma- b^2 \left(y_1 + y_2 - 2 c y_1 y_2 \beta + 2 c (y_1 + y_2) \gamma \right) \\
&- \beta \left(-2 c^3 + (y_1 + y_2)\beta + 2 \gamma 
+ 2 c (y_1 \beta + \gamma)(y_2 \beta + \gamma)\right) \\
&+ 2 b \left(-c + c^2 \big((y_1 + y_2)\beta - \gamma\big) 
+ \gamma (y_1 \beta + \gamma)(y_2 \beta + \gamma)\right).
\end{aligned}
\end{equation*}
Here $\mathscr{R}_1(y_1,y_2)$ is a polynomial of degree $2$ and  $\mathscr{R}_4(y_1,y_2)$ is a polynomial of degree $7$. 

The system~\eqref{sist--H1-H4} could have a continuum of solutions $(y_1,y_2)$, but then the possible periodic solutions would not be limit cycles. Therefore, we assume that this system has finitely many solutions.

From $\widetilde{\mathscr{E}}_2(y_1,y_2)=0$ we obtain that
\begin{equation}\label{my11}
\begin{aligned}
y_1=&- \dfrac{\mathscr{P}_1(y_2)}{\mathscr{Q}_1(y_2)},
\end{aligned}
\end{equation}
where 
\begin{equation}
    \begin{aligned}
     \mathscr{P}_1(y_2)=& b^2 \big(y_2 + 2 c y_2 \gamma\big)
+ \beta \big(-2 c^3 + y_2 \beta + 2 \gamma + 2 c \gamma (y_2 \beta + \gamma)\big)
- 2 b \big(-c + c^2 (y_2 \beta - \gamma)\\
& + \gamma^2 (y_2 \beta + \gamma)\big), \\
\mathscr{Q}_1(y_2)=& 2 b^3 y_2 \gamma
+ b^2 \big(1 + 2 c (-y_2 \beta + \gamma)\big)
+ \beta^2 \big(1 + 2 c (y_2 \beta + \gamma)\big)
- 2 b \beta \big(c^2 + \gamma (y_2 \beta + \gamma)\big).
    \end{aligned}
\end{equation}

So, if we substitute \eqref{my11} into $\widetilde{\mathscr{E}}_2(y_1,y_2)=0$, we obtain a polynomial $\widetilde{\mathscr{P}}(y_1,y_2)$ of degree $10$ in the variable $y_2$, and hence at most ten real roots. Therefore, the system has at most ten real solutions, and consequently, the discontinuous differential system can have at most five limit cycles.

Now we shall prove that the discontinuous piecewise differential system $\eqref{syst:S1}-\eqref{syst:S4}$ separated by the straight line $\Sigma:x=0$, having three limit cycles. 
In $\Sigma^+$, we consider the cubic isochronous center of type \eqref{syst:S1}
\begin{equation}\label{sist1-S1-S4}
    \begin{aligned}
    \dot{x} =& \frac{-153 (14 x+9) y^2+36 x (140 x+27) y+12 x (9 x (33-14 x)+743)-16358 y-15300}{4200}, \\[4pt]
    \dot{y} =& \frac{1}{700} \left(-36 x^2 (7 y+9)+12 x (y (70 y+153)+131)-y \left(357 y^2+837 y+1486\right)-900\right),
\end{aligned}
\end{equation}
with the first integral
\begin{equation*}
H_1(x,y) = \frac{\left(\frac{3}{5} x - y - \frac{9}{10}\right)^2 + \left(-\frac{7}{10} y - \frac{9}{10}\right)^2}{1 + 2 \left(-\frac{7}{10} y - \frac{9}{10}\right) \left(\frac{3}{5} x - y - \frac{9}{10}\right)}.
\end{equation*}
In $\Sigma^{-}$, we consider the cubic isochronous center of type \eqref{syst:S4}
\begin{equation}\label{sist2-S1-S4}
    \begin{aligned}
    \dot{x} =& \frac{1914 x^3+3 x^2 (3959 y+2732)+6 x (3 y (109 y+879)+1147)+y (377-54 y (44 y+31))-40}{3500}, \\[4pt]
    \dot{y} =& \frac{-214 x^3-18 x^2 (39 y+37)+9 x y (407 y+232)+3718 x+3 y (9 y (38 y+127)+1066)+2970}{3500},
   \end{aligned}
\end{equation}
with first integral    
\begin{equation*}
    \begin{aligned}
H_4(x,y) =& \frac{\left(-\frac{3}{5} x - \frac{1}{10} y - \frac{2}{5}\right)^2 + \left(\frac{1}{10} x + \frac{3}{5} y + \frac{3}{10}\right)^2 + 4 \left(\frac{1}{10} x + \frac{3}{5} y + \frac{3}{10}\right)^4 4 \left(\frac{1}{10} x + \frac{3}{5} y + \frac{3}{10}\right)^6}{\left( 1 + 3 \left(\frac{1}{10} x + \frac{3}{5} y + \frac{3}{10}\right)^2\right)^3}.
   \end{aligned}
\end{equation*}

Solving system~\eqref{sist--H1-H4} for $y_1 < y_2$, we obtain three pairs of real solutions $(p_i,q_i)$, where $p_i=(0,x_i)$ and $q_i=(0,y_i)$ for $i=1,\dots,3$, with $x_i<y_i$, given by
\begin{align*}
p_1 =& (0, -2.41934 \ldots), \quad q_1=(0, 0.925787 \ldots), \\
p_2 =& (0, -2.07259 \ldots), \quad q_2=(0, 0.310176 \ldots), \\
p_3 =& (0, -1.55539 \ldots), \quad q_3=(0, -0.427096\ldots),
\end{align*}
that provide the crossing limit cycles of discontinuous piecewise differential system \eqref{sist1-S1-S4}–\eqref{sist2-S1-S4} shown in Figure \ref{fig-S1-S4}. 

\begin{figure}
\centering
\includegraphics[scale=0.30]{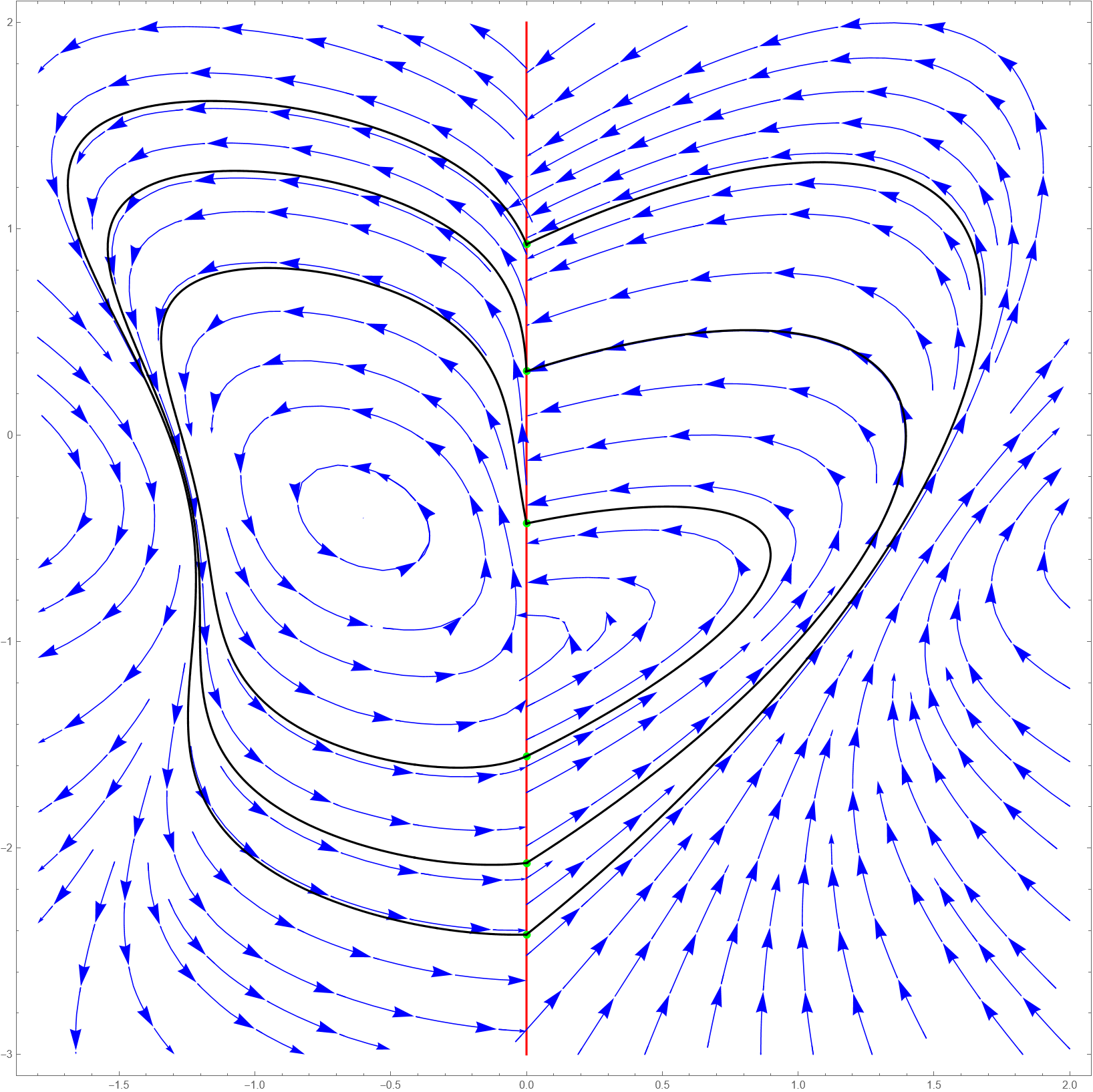}
\caption{The three limit cycle of the discontinuous piecewise differential system \eqref{sist1-S1-S4}-\eqref{sist2-S1-S4} of Theorem \ref{Thm1}.}
\label{fig-S1-S4} 
\end{figure}

\subsection*{\textbf{Proof of Theorem \ref{Thm1} for systems  $\mathtt{S}_2- \mathtt{S}_i$, $i=2,3,4$.}}
We consider in $\Sigma^+$ the  cubic isochronous centers  $(\mathtt{S}_2)$ after an arbitrary affine change of variables,  with its first
integral $H_2(x, y)$; and in $\Sigma^-$,  a  the  cubic isochronous centers $(\mathtt{S}_i)$ with its first integral $H_i(x, y)$, for $i=2,3$ and $4$, where
\begin{equation*}\label{sistemamud}
\begin{aligned}
H_2(x,y)
&=
\frac{(a_1 x + b_1 y + c_1)^2+(\alpha_1 x + \beta_1 y + \gamma_1)^2}
{1+2\,(a_1 x + b_1 y + c_1)(\alpha_1 x + \beta_1 y + \gamma_1)}, \\[0.5em]
\widetilde{H_2}(x,y)
&=
\frac{\big((a_2 x + b_2 y + c_2)^2+(\alpha_2 x + \beta_2 y + \gamma_2)^2\big)^2}
{1+4\,(a_2 x + b_2 y + c_2)(\alpha_2 x + \beta_2 y + \gamma_2)}, \\[0.5em]
\widetilde{H_3}(x,y)
&=
\frac{(a_3 x + b_3 y + c_3)^2+(\alpha_3 x + \beta_3 y + \gamma_3)^2
-4(a_3 x + b_3 y + c_3)^4+4(a_3 x + b_3 y + c_3)^6}
{\big(-1+3(a_3 x + b_3 y + c_3)^2\big)^3}, \\[0.5em]
\widetilde{H_4}(x,y)
&=
\frac{(a_4 x + b_4 y + c_4)^2+(\alpha_4 x + \beta_4 y + \gamma_4)^2
+4(a_4 x + b_4 y + c_4)^4+4(a_4 x + b_4 y + c_4)^6}
{\big(1+3(a_4 x + b_4 y + c_4)^2\big)^3}.
\end{aligned}
\end{equation*}

If the discontinuous piecewise differential system $(\mathtt{S}_2)$-$(\mathtt{S}_i)$ for $i=2,3, 4$ admits a limit cycle that intersects the nonregular line  $\Sigma$ at two points, $(0,y_1)$ and $(0, y_2)$, then these two points must satisfy the following system of equations:
\begin{align}\label{sist_H_Hi}
 H_2(0,y_1) - H_2(0,y_2)=& \frac{(y_2-y_1)\mathscr{M}_2(x,y)}{\left(1 + 4 (c_1 + b_1 y_1)(y_1 \beta_1 + \gamma_1)\right)
\left(1 + 4 (c_1 + b_1 y_2)(y_2 \beta_1 + \gamma_1)\right)} = 0, \nonumber \\
 \widetilde{H_i}(0,y_1) - \widetilde{H_i}(0,y_2) =& \frac{(y_1-y_2)\mathscr{N}_{i}(x,y)}{D_{i}(x,y)} = 0,
\end{align}
where 
 \begin{equation*}
     \begin{aligned}
         D_2(x,y)=& \left(1 + 4 (c_2 + b_2 y_1)(y_1 \beta_2 + \gamma_2)\right)
\left(1 + 4 (c_2 + b_2 y_2)(y_2 \beta_2 + \gamma_2)\right), \\
         D_3(x,y)=& \left(-1 + 3 (c_3 + b_3 y_1)^2\right)^3
\left(-1 + 3 (c_3 + b_3 y_2)^2\right)^3, \\
         D_4(x,y)=& \left(1 + 3 (c_4 + b_4 y_1)^2\right)^3
\left(1 + 3 (c_4 + b_4 y_2)^2\right)^3.
     \end{aligned}
 \end{equation*}
Moreover, $\mathscr{M}_2(x,y)$ has degree five, and for $i=2,3,4$, the polynomials $\mathscr{N}_i(x,y)$ have degrees $5$, $7$, and $7$, respectively.

 
The number of common zeros $(x, y)$ of the polynomials $\mathscr{M}_2$ and $\mathscr{N}_i$ determines the existence and number of limit cycles in the discontinuous piecewise differential system  $(\mathtt{S}_2)$-$(\mathtt{S}_i)$. We compute the two resultants 
\[
\mathcal{R}^{i}_{x} = \operatorname{Res}(\mathscr{M}_2, \mathscr{N}_i, x) \quad \text{and} \quad \mathcal{R}^{i}_{y} = \operatorname{Res}(\mathscr{M}_2, \mathscr{N}_i, y),
\]
with respect to $x$ and $y$, respectively. Since $\mathscr{M}_2$ and $\mathscr{N}_i$ are symmetric in $x$ and $y$, the resultants $\mathcal{R}^{i}_{x}$ and $\mathcal{R}^{i}_{y}$ coincide. It therefore suffices to compute one of them. We focus on $\mathcal{R}^{i}_{x}$, which is a polynomial in $y$ of degree $16$, $26$, and $26$ for $i=2,3,4$, respectively. Due to the size and complexity of its expression, we omit it here.
Consequently, the maximum number of solutions of system~\eqref{sist_H_Hi} is at most $8$, $13$ and $13$  respectively.

Now we shall prove that the discontinuous piecewise differential system \eqref{syst:S2}-\eqref{syst:S2} separated by the straight line $\Sigma:x=0$, having three limit cycles. 

In $\Sigma^+$, we consider the cubic isochronous center of type \eqref{syst:S2}
\begin{equation}\label{sist1-S2-S2}
    \begin{aligned}
    \dot{x} =& \frac{1}{4800} \big( -11808 x^3+24 x^2 (599-1950 y)+8 x (195 y (5-33 y)+472)- \\
    & 130 y (3 y (42 y+17)-20)+717 \big), \\[4pt]
    \dot{y} =& \frac{1}{2400} \big( 42 (612 x+197) y^2+8 (51 x (66 x+17)-74) y+680 x (3 x (4 x-1)-2) \\
    & +7044 y^3+465 \big),    
\end{aligned}
\end{equation}
with the first integral
\begin{equation*}
H_2(x,y) = \frac{\left(\left(\frac{1}{5} x + \frac{2}{5} y - \frac{3}{10}\right)^2 + \left(\frac{1}{10} x + \frac{3}{10} y - 1\right)^2\right)^2}{1 + 4 \left(\frac{1}{5} x + \frac{2}{5} y - \frac{3}{10}\right) \left(\frac{1}{10} x + \frac{3}{10} y - 1\right)}.
\end{equation*}

In $\Sigma^{-}$, we consider the cubic isochronous center of type \eqref{syst:S2}
\begin{equation}\label{sist2-S2-S2}
    \begin{aligned}
    \dot{x} =& \frac{1}{3500} \big( -1565 x^3+15 x^2 (864 y-893)-15 x (6 y (57 y+2)-541) - \\
    & 30 y (9 y (6 y-31)+436)+5893 \big), \\[4pt]
    \dot{y} =& \frac{1}{10500} \big( -20880 x^3+435 x^2 (57 y+1)+290 x (9 y (9 y-31)+218) - \\
    & 45 y (6 y (37 y-92)+331)-5297 \big),  
   \end{aligned}
\end{equation}
with first integral    
\begin{equation*}
\widetilde{H_2}(x,y) = \frac{\Big( \big(\frac{1}{2} x + \frac{1}{10} y - \frac{1}{2}\big)^2 + \big(-\frac{4}{5} x - \frac{3}{10} y - \frac{1}{10}\big)^2 \Big)^2}{1 + 4 \big(\frac{1}{2} x + \frac{1}{10} y - \frac{1}{2}\big) \big(-\frac{4}{5} x - \frac{3}{10} y - \frac{1}{10}\big)}.
\end{equation*}

Solving system~\eqref{sist_H_Hi} for $y_1 < y_2$, we obtain three pairs of real solutions $(p_i,q_i)$, where $p_i=(0,x_i)$ and $q_i=(0,y_i)$ for $i=1,\dots,3$, with $x_i<y_i$, given by
\begin{align*}
p_1 =& (0, -1.3786 \ldots), \quad q_1=(0, 3.65369 \ldots), \\
p_2 =& (0, -0.98002 \ldots), \quad q_2=(0, 2.38844 \ldots), \\
p_3 =& (0, -0.86839\ldots), \quad q_3=(0, 2.15357\ldots),
\end{align*}
that provide the crossing limit cycles of discontinuous piecewise differential system \eqref{sist1-S2-S2}–\eqref{sist2-S2-S2} shown in Figure \ref{fig-S2-S2}. 

\begin{figure}
\centering
\includegraphics[scale=0.30]{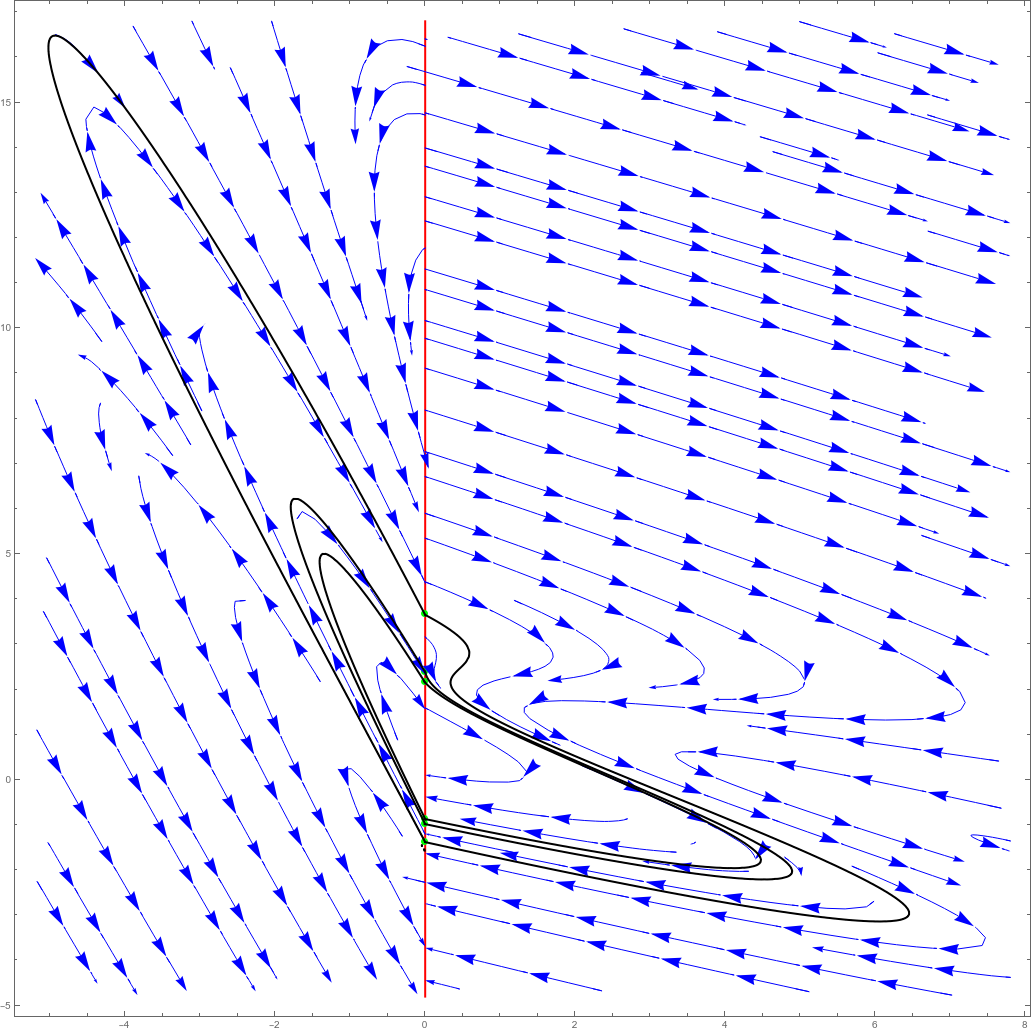}
\caption{The three limit cycle of the discontinuous piecewise differential system \eqref{sist1-S2-S2}-\eqref{sist2-S2-S2} of Theorem \ref{Thm1}.}
\label{fig-S2-S2} 
\end{figure}

Now we shall prove that the discontinuous piecewise differential system $\eqref{syst:S2}-\eqref{syst:S3}$ separated by the straight line $\Sigma:x=0$, having three limit cycles. 

In $\Sigma^+$, we consider the cubic isochronous center of type \eqref{syst:S2}
\begin{equation}\label{sist1-S2-S3}
\begin{aligned}
   \dot{x} =& \frac{1}{2650} \big( 33466 x^3-120 x^2 (65 y+56)+5 x (39 y (18-17 y)+943) + \\
    & 65 y \left(63 y^2+87 y-29\right)-1985 \big), \\[4pt]
    \dot{y} =& \frac{1}{2650} \big( 2320 x^3+174 x^2 (17 y-9)-58 x (3 y (63 y+58)-29) + \\
    & y (y (2801 y+7149)+2917)-963 \big), 
    \end{aligned}
\end{equation}
with the first integral
\begin{equation*}
H_2(x,y) = \frac{\left(\left(-x + \frac{9}{10} y + \frac{7}{10}\right)^2 + \left(-\frac{2}{5} x - \frac{7}{10} y - \frac{1}{10}\right)^2\right)^2}{1 + 4 \left(-x + \frac{9}{10} y + \frac{7}{10}\right) \left(-\frac{2}{5} x - \frac{7}{10} y - \frac{1}{10}\right)}.
\end{equation*}

In $\Sigma^{-}$, we consider the cubic isochronous center of type \eqref{syst:S3}
\begin{equation}\label{sist2-S2-S3}
\begin{aligned}
    \dot{x} =& \frac{1}{4700} \big( 2558 x^3-3 x^2 (6975 y+3631)+8 x \left(339 y^2+777 y+481\right) + \\
    & 4 y (8 y (244 y+351)-317)-2516 \big), \\[4pt]
    \dot{y} =& \frac{1}{4700} \big( 214 x^3-6 x^2 (151 y+83)+x (2173-3 y (2237 y+2138)) + \\
    & 4 y (y (526 y+915)+166)-620 \big), 
      \end{aligned}
\end{equation}
with first integral    
\begin{equation*}
\widetilde{H_3}(x,y) = \frac{ \left(-\frac{1}{10} x + \frac{4}{5} y +\frac{2}{5}\right)^2 + \left(-\frac{3}{5} x + \frac{1}{10} y + \frac{1}{10}\right)^2  - 4 \left(-\frac{1}{10} x + \frac{4}{5} y + \frac{2}{5}\right)^4 + 4 \left(-\frac{1}{10} x + \frac{4}{5} y + \frac{2}{5}\right)^6 }{\left(- 1 + 3 \left(-\frac{1}{10} x + \frac{4}{5} y + \frac{2}{5}\right)^2 \right)^3}.
\end{equation*}

Solving system~\eqref{sist_H_Hi} for $y_1 < y_2$, we obtain three pairs of real solutions $(p_i,q_i)$, where $p_i=(0,x_i)$ and $q_i=(0,y_i)$ for $i=1,\dots,3$, with $x_i<y_i$, given by
\begin{align*}
p_1 =& (0, -1.6744 \ldots), \quad q_1=(0, -1.33802 \ldots), \\
p_2 =& (0, -1.00923 \ldots), \quad q_2=(0, -0.00178922 \ldots), \\
p_3 =& (0, 0.387748 \ldots), \quad q_3=(0, 1.13514 \ldots),
\end{align*}
that provide the crossing limit cycles of discontinuous piecewise differential system \eqref{sist1-S2-S3}–\eqref{sist2-S2-S3} shown in Figure \ref{fig-S2-S3}. 

\begin{figure}
\centering
\includegraphics[scale=0.30]{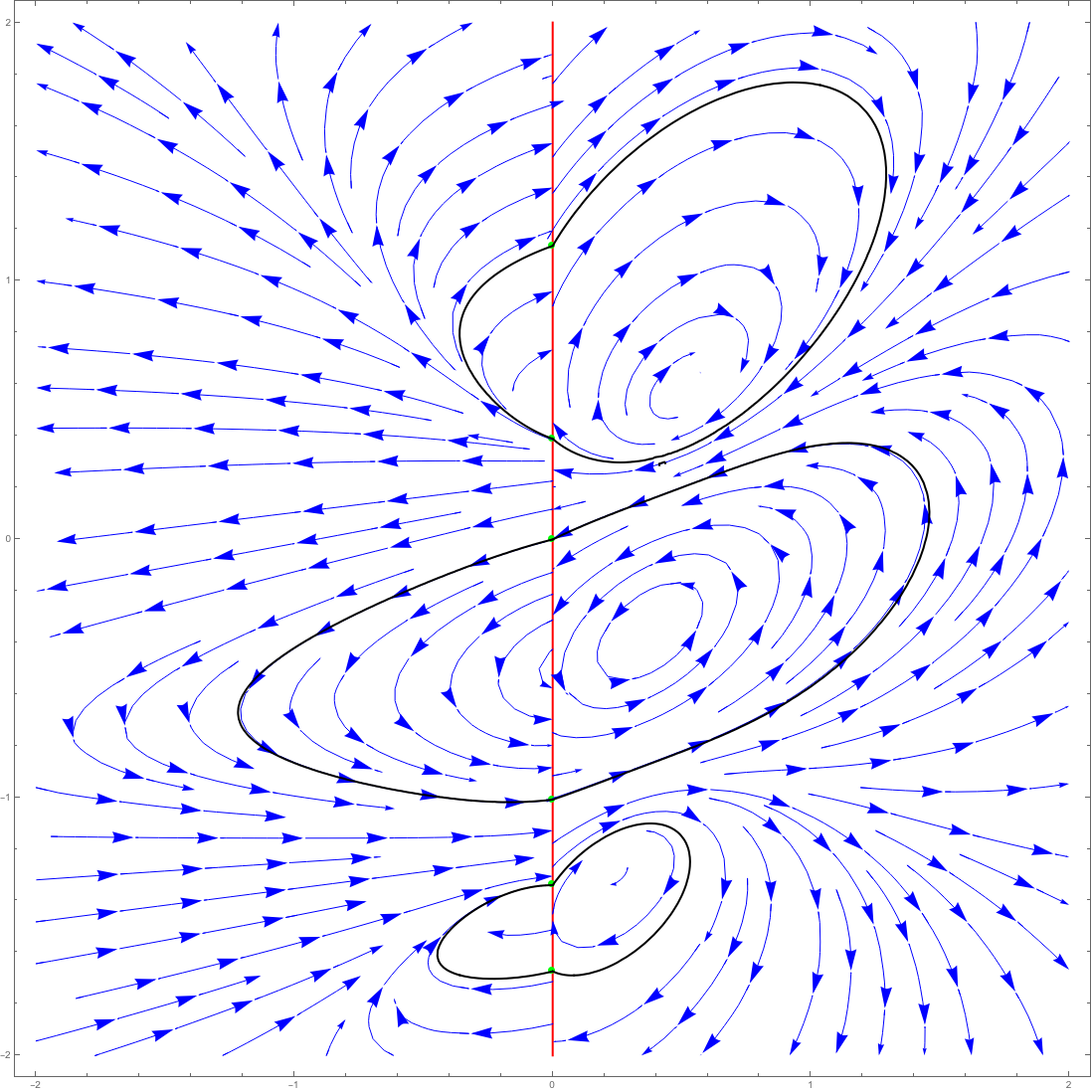}
\caption{The three limit cycle of the discontinuous piecewise differential system \eqref{sist1-S2-S3}-\eqref{sist2-S2-S3} of Theorem \ref{Thm1}.}
\label{fig-S2-S3} 
\end{figure}

Now we shall prove that the discontinuous piecewise differential system $\eqref{syst:S2}-\eqref{syst:S4}$ separated by the straight line $\Sigma:x=0$, having three limit cycles. 

In $\Sigma^+$,  we consider the cubic isochronous center of type \eqref{syst:S2}
\begin{equation}\label{sist1-S2-S4}
    \begin{aligned}
    \dot{x} =& -\frac{6}{25} (2 x+5) (x-2 y+1)^2+\frac{1}{200} (2 x+5)^3+2 x-4 y+2, \\[4pt]
    \dot{y} =& \frac{1}{400} \big( 4 \left(60 x (x+5) y-5 x (x+5) (4 x+1)-64 y^3-144 y^2\right)+268 y+417 \big),    
   \end{aligned}
\end{equation}
with the first integral
\begin{equation*}
H_2(x,y) = \frac{\left(\left(\frac{2}{5} x - \frac{4}{5} y + \frac{2}{5}\right)^2 + \left(-\frac{1}{5} x - \frac{1}{2}\right)^2\right)^2}{1 + 4 \left(-\frac{1}{5} x - \frac{1}{2}\right) \left(\frac{2}{5} x - \frac{4}{5} y + \frac{2}{5}\right)}.
\end{equation*}

In $\Sigma^{-}$, we consider the cubic isochronous center of type \eqref{syst:S4}
\begin{equation}\label{sist2-S2-S4}
    \begin{aligned}
    \dot{x} =& \frac{1}{5300} \big( 6885 x^3+9 x^2 (2806 y+2147)-45 x (y (141 y+854)+694) - \\
    & 25 y (y (218 y+633)+154)+7500 \big), \\[4pt]
    \dot{y} =& \frac{1}{5300} \big( -486 x^3+135 x^2 (271 y+317)+2 x \left(5181 y^2-5331 y-15811\right) - \\
    & 5 (3 y (y (1131 y+4231)+4702)+4084) \big),  
   \end{aligned}
\end{equation}
with first integral    
\begin{equation*}
    \begin{aligned}
\widetilde{H_4}(x,y) =& \frac{\left(-\frac{1}{2} x - \frac{9}{10} y - \frac{4}{5}\right)^2 + \left(\frac{9}{10} x - \frac{1}{2} y - 1\right)^2 + 4 \left(\frac{9}{10} x - \frac{1}{2} y - 1\right)^4 + 4 \left(\frac{9}{10} x - \frac{1}{2} y - 1\right)^6}{\left(1 + 3 \left(\frac{9}{10} x - \frac{1}{2} y - 1\right)^2 \right)^3}.
   \end{aligned}
\end{equation*}

Solving system~\eqref{sist_H_Hi} for $y_1 < y_2$, we obtain three pairs of real solutions $(p_i,q_i)$, where $p_i=(0,x_i)$ and $q_i=(0,y_i)$ for $i=1,\dots,3$, with $x_i<y_i$, given by
\begin{align*}
p_1 =& (0, -1.36147 \ldots), \quad q_1=(0, -0.186783 \ldots), \\
p_2 =& (0, -0.0884511 \ldots), \quad q_2=(0, 3.04379 \ldots), \\
p_3 =& (0, -0.0330296 \ldots), \quad q_3=(0, 2.24123 \ldots),
\end{align*}
that provide the crossing limit cycles of discontinuous piecewise differential system \eqref{sist1-S2-S4}–\eqref{sist2-S2-S4} shown in Figure \ref{fig-S2-S4}. 

\begin{figure}
\centering
\includegraphics[scale=0.35]{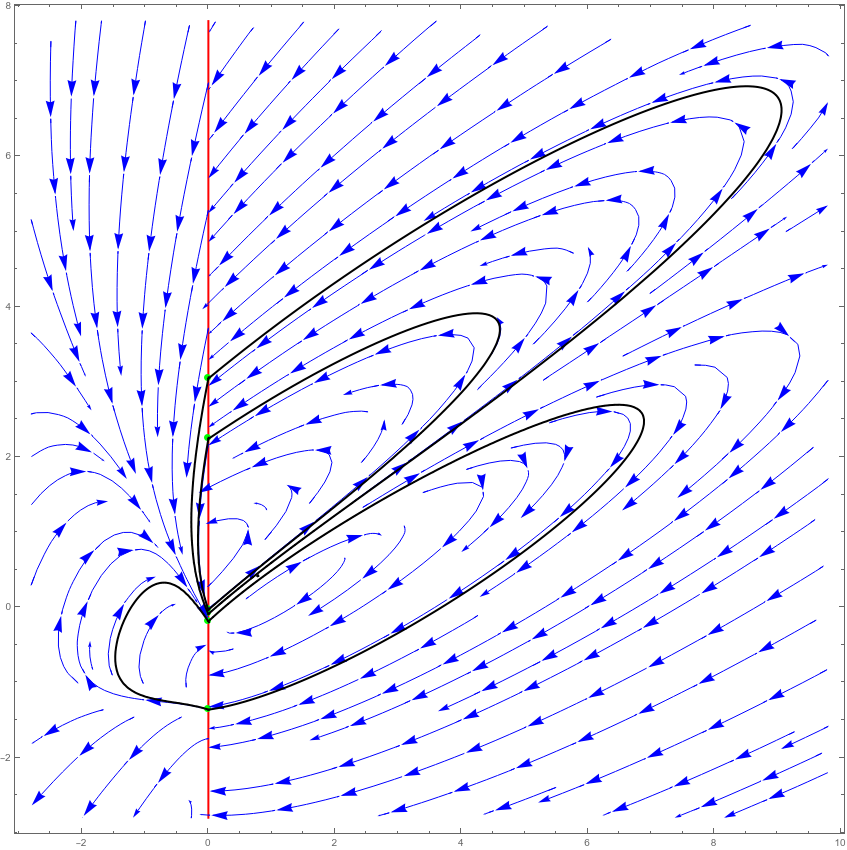}
\caption{The three limit cycle of the discontinuous piecewise differential system \eqref{sist1-S2-S4}-\eqref{sist2-S2-S4} of Theorem \ref{Thm1}.}
\label{fig-S2-S4} 
\end{figure}


\section{Proof of Theorem $\ref{Thm2}$}
\label{sect:04}

\subsection*{\textbf{Proof of Theorem \ref{Thm2} for systems  $\mathtt{S}_1- \mathtt{S}_2$.}}

Now we shall prove that the discontinuous piecewise differential system $\eqref{syst:S1}-\eqref{syst:S2}$ separated by the straight line $\Sigma:x=0$, having three limit cycles. 

In $\Sigma^+$, we consider  the cubic isochronous center of type \eqref{syst:S1}
\begin{equation}\label{sist1-S1-S2}
    \begin{aligned}
    \dot{x} =& \frac{1}{100} \big( 9 x^3-12 x^2 (y+3)+x (y (3 y+46)+227)-4 (y (2 y+51)+5) \big), \\[4pt]
    \dot{y} =& \frac{1}{100} \big( 9 x^2 (y-3)-12 x ((y-2) y-28)+y (y (3 y+5)-247)-185 \big),
\end{aligned}
\end{equation}
with the first integral
\begin{equation*}
H_1(x,y) = \frac{\left(\frac{3}{10} x - \frac{1}{5} y - \frac{1}{5}\right)^2 + \left(\frac{1}{10} y - \frac{3}{10}\right)^2}{1 + 2 \left(\frac{1}{10} y - \frac{3}{10}\right) \left(\frac{3}{10} x - \frac{1}{5} y - \frac{1}{5}\right)}.
\end{equation*}

In $\Sigma^{-}$, we consider the cubic isochronous center of type \eqref{syst:S2}
\begin{equation}\label{sist2-S1-S2}
    \begin{aligned}
    \dot{x} =& \frac{1}{200} \big( -2872 x^3+x^2 (5112-6660 y)-2 x (111 y (23 y-32)+869) - \\
    & 37 y (y (35 y-66)+29)-46 \big), \\[4pt]
    \dot{y} =& \frac{1}{100} \big( 6 (595 x-254) y^2+(204 x (23 x-22)+581) y+34 x (12 x (5 x-8)+29) \\
    & +899 y^3+112 \big),
   \end{aligned}
\end{equation}
with first integral    
\begin{equation*}
H_2(x,y) = \frac{\left(\left(-\frac{3}{5} x - \frac{1}{2} y + \frac{7}{10}\right)^2+\left(- x - \frac{7}{10} y - \frac{1}{10}\right)^2\right)^2}{1 + 4 \left(-\frac{3}{5} x - \frac{1}{2} y + \frac{7}{10}\right) \left(- x -\frac{7}{10} y - \frac{1}{10}\right)}.
\end{equation*}

For the piecewise differential systems~\eqref{sist1-S1-S2}--\eqref{sist2-S1-S2}, crossing limit cycles intersect the discontinuity curve $\Sigma$ at pairs of distinct points $(p,q)$, with $p=(0,x)$ and $q=(0,y)$, $x \neq y$, if and only if
\begin{equation}\label{thm2:S1-S2}
    \begin{aligned}
        H_{1}(0,x) &= H_{1}(0,y), \\
        H_{2}(0,x) &= H_{2}(0,y). 
    \end{aligned}
\end{equation}

This is equivalent to
$$10 + 51y + x(51 + 4y)=0$$
\begin{align*}
&2300 + 25537y - 37296y^2 + 24642y^3  + 1369\,x^3(18 - 44y + 35y^2)  + x(25537 - 104192y \\
&\quad + 115810y^2 - 60236y^3) + 37\,x^2(-1008 + 3130y - 3588y^2 + 1295y^3)=0.
\end{align*}

Solving~\eqref{thm2:S1-S2}  yields three distinct real pairs $(p_i,q_i)$, where $p_i=(0,x_i)$ and $q_i=(0,y_i)$ for $i=1,2,3$, satisfying $x_i<y_i$. More precisely,
\begin{align*}
p_1 &= (0, -1.93773\ldots), \quad   q_1 = (0, 2.05378\ldots), \\
p_2 &= (0, -1.57213\ldots), \quad   q_2 = (0, 1.56959\ldots), \\
p_3 &= (0, -0.433316\ldots), \quad  q_3 = (0, 0.245584\ldots).
\end{align*}
These pairs characterize the crossing limit cycles depicted in Figure~\ref{fig-S1-S2}.

\begin{figure} 
\centering
\includegraphics[scale=0.30]{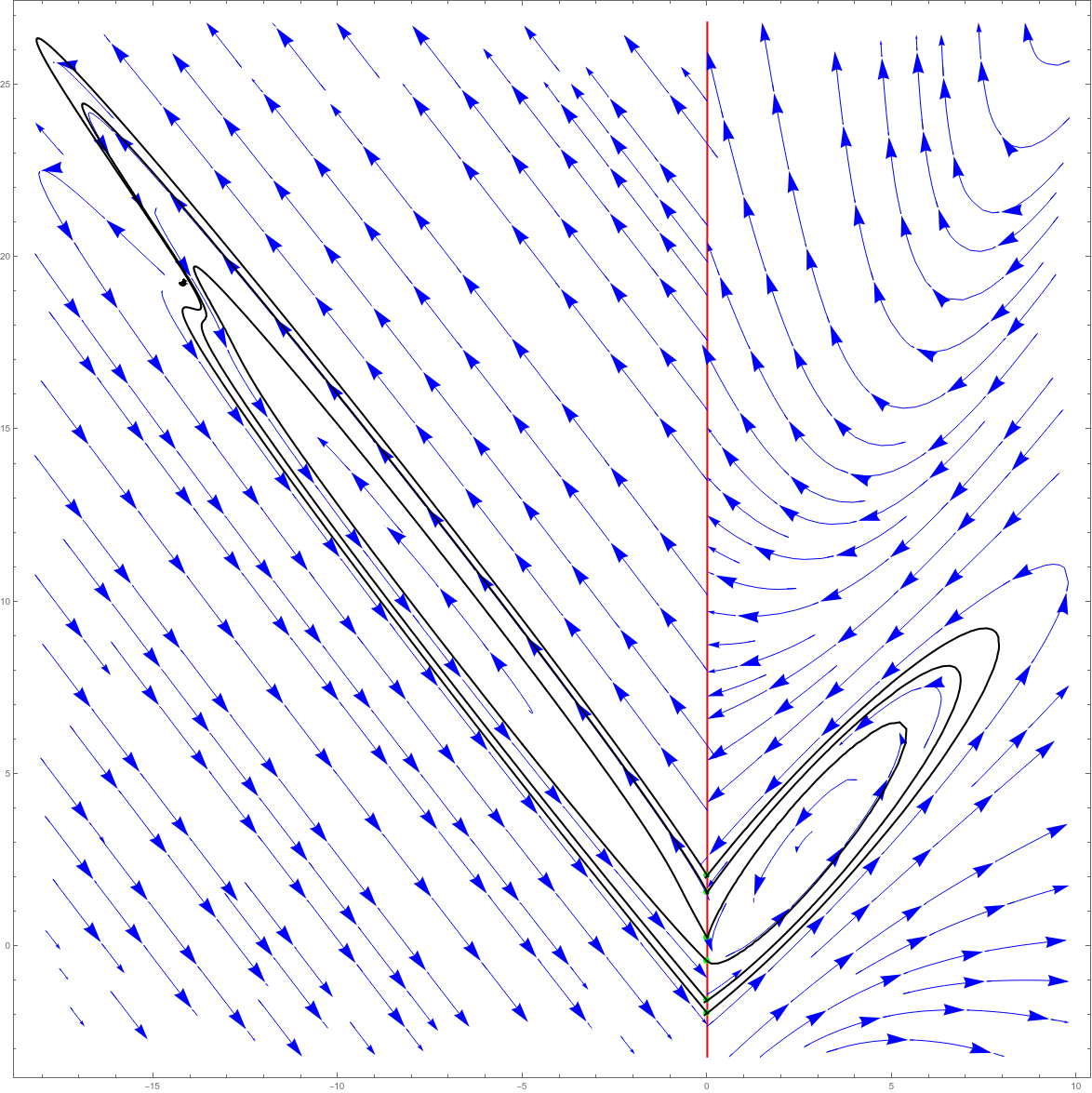}
\caption{The three limit cycle of the discontinuous piecewise differential system \eqref{sist1-S1-S2}-\eqref{sist2-S1-S2} of Theorem \ref{Thm2}.}
\label{fig-S1-S2}
\end{figure}

\subsection*{\textbf{Proof of Theorem \ref{Thm2} for systems  $\mathtt{S}_3- \mathtt{S}_3$.}}

\smallskip

Now we shall prove that the discontinuous piecewise differential system $\eqref{syst:S3}-\eqref{syst:S3}$ separated by the straight line $\Sigma:x=0$, having three limit cycles. 

In $\Sigma^+$,we consider the cubic isochronous center of type \eqref{syst:S3}
\begin{equation}\label{sist1-S3-S3}
    \begin{aligned}    
    \dot{x} =& \frac{1}{300} (5 x+5 y+3) \big(31 x^2-20 x (5 y+3)-10 y (5 y+6)+82 \big), \\[4pt]
    \dot{y} =& \frac{1}{1500} \big(-100 x^3+615 x^2 (5 y+3)+x (885 y (5 y+6)-1807)+10 (5 y+3) (5 y (5 y+6)-41) \big),   
 \end{aligned}
\end{equation}
with the first integral
\begin{equation*}
\begin{aligned}
\widetilde{H_3}(x,y) = \frac{\frac{9}{100} x^2 + \left(\frac{1}{2} x + \frac{1}{2} y + \frac{3}{10}\right)^2 - 4 \left(\frac{1}{2} x + \frac{1}{2} y + \frac{3}{10}\right)^4 + 4 \left(\frac{1}{2} x + \frac{1}{2} y + \frac{3}{10}\right)^6}{\left( - 1 + 3 \left(\frac{1}{2} x + \frac{1}{2} y + \frac{3}{10}\right)^2 \right)^3}.
\end{aligned}
\end{equation*}

In $\Sigma^{-}$, we consider the cubic isochronous center of type \eqref{syst:S3}
\begin{equation}\label{sist2-S3-S3}
    \begin{aligned}
    \dot{x} =& \frac{1}{1300} \big(-5008 x^3 + 6 x^2 (905 y+2323) + 6 x (5 y (105 y-404)-2021) - \\
    & 25 y (y (55 y+93)-211) + 3457 \big), \\[4pt]
    \dot{y} =& \frac{1}{3250} \big( 7552 x^3 - 24 x^2 (1035 y+737) + 5 x (3 y (3362-125 y)+3349) + \\
    & 5 y (5 y (415 y-174)-5169) - 6392 \big),      
   \end{aligned}
\end{equation}
with first integral    
\begin{equation*}
    \begin{aligned}
\widetilde{H_3}(x,y) =& \frac{ \left(\frac{4}{5} x + \frac{1}{2} y - \frac{9}{10}\right)^2 + \left(- x + y + \frac{4}{5}\right)^2 - 4 \left(\frac{4}{5} x + \frac{1}{2} y - \frac{9}{10}\right)^4 + 4 \left(\frac{4}{5} x + \frac{1}{2} y - \frac{9}{10}\right)^6 }{\left( - 1 + 3 \left(\frac{4}{5} x + \frac{1}{2} y - \frac{9}{10}\right)^2 \right)^3}.
   \end{aligned}
\end{equation*}

For the piecewise differential systems~\eqref{sist1-S3-S3}--\eqref{sist2-S3-S3}, crossing limit cycles intersect the discontinuity curve $\Sigma$ at pairs of distinct points $(p,q)$, with $p=(0,x)$ and $q=(0,y)$, $x \neq y$, if and only if
\begin{equation}\label{thm2:S3-S3}
    \begin{aligned}
        H_{3}(0,x) &= H_{3}(0,y), \\
    \widetilde{H_3}(0,x) &= \widetilde{H_3}(0,y).
    \end{aligned}
\end{equation}

This is equivalent to
\begin{align*}
&-436978 
+ 1500\,x^3(-11 + 15y)(29 + 15y) 
+ 625\,x^4(-11 + 15y)(29 + 15y) \\
&\quad - 1595\,y(6 + 5y)(-73 + 5y(6 + 5y)) + 30\,x(-11 + 15y)(29 + 15y)(-73 + 5y(6 + 5y)) \\
&\quad + 25\,x^2(-11 + 15y)(29 + 15y)(-37 + 5y(6 + 5y))=0.
\end{align*}
\begin{align*}
&x\Big(-282768772 
+ 5x\big(63632426 
+ 34975x(-2196 + 5x(223 + x(-54 + 5x)))\big)\Big) \\
&\quad + 282768772\,y + 450x^2\Big(3704272 
+ 1425x(-2196 + 5x(223 + x(-54 + 5x)))\Big)y \\
&\quad + 55\Big(-5784766 
+ 45x\big(-673504 
+ 125x^2(-2196 + 5x(223 + x(-54 + 5x)))\big)\Big) y^2 \\
&\quad + 274500\,(1399 + 45x(114 + 55x))\,y^3  - 139375\,(1399 + 45x(114 + 55x))\,y^4 \\
&\quad + 33750\,(1399 + 45x(114 + 55x))\,y^5  - 3125\,(1399 + 45x(114 + 55x))\,y^6=0.
\end{align*}
Solving~\eqref{thm2:S3-S3}  yields three distinct real pairs $(p_i,q_i)$, where $p_i=(0,x_i)$ and $q_i=(0,y_i)$ for $i=1,2,3$, satisfying $x_i<y_i$. More precisely,
\begin{align*}
p_1 =& (0, 0.733638 \ldots), \quad q_1=(0, 2.79455 \ldots), \\
p_2 =& (0, -5.33051 \ldots), \quad q_2=(0, -1.93341 \ldots), \\
p_3 =& (0, -0.762889 \ldots), \quad q_3=(0, -0.437111 \ldots),
\end{align*}

These pairs characterize the crossing limit cycles depicted in Figure \ref{fig-S3-S3}. 

\begin{figure}
\centering
\includegraphics[scale=0.30]{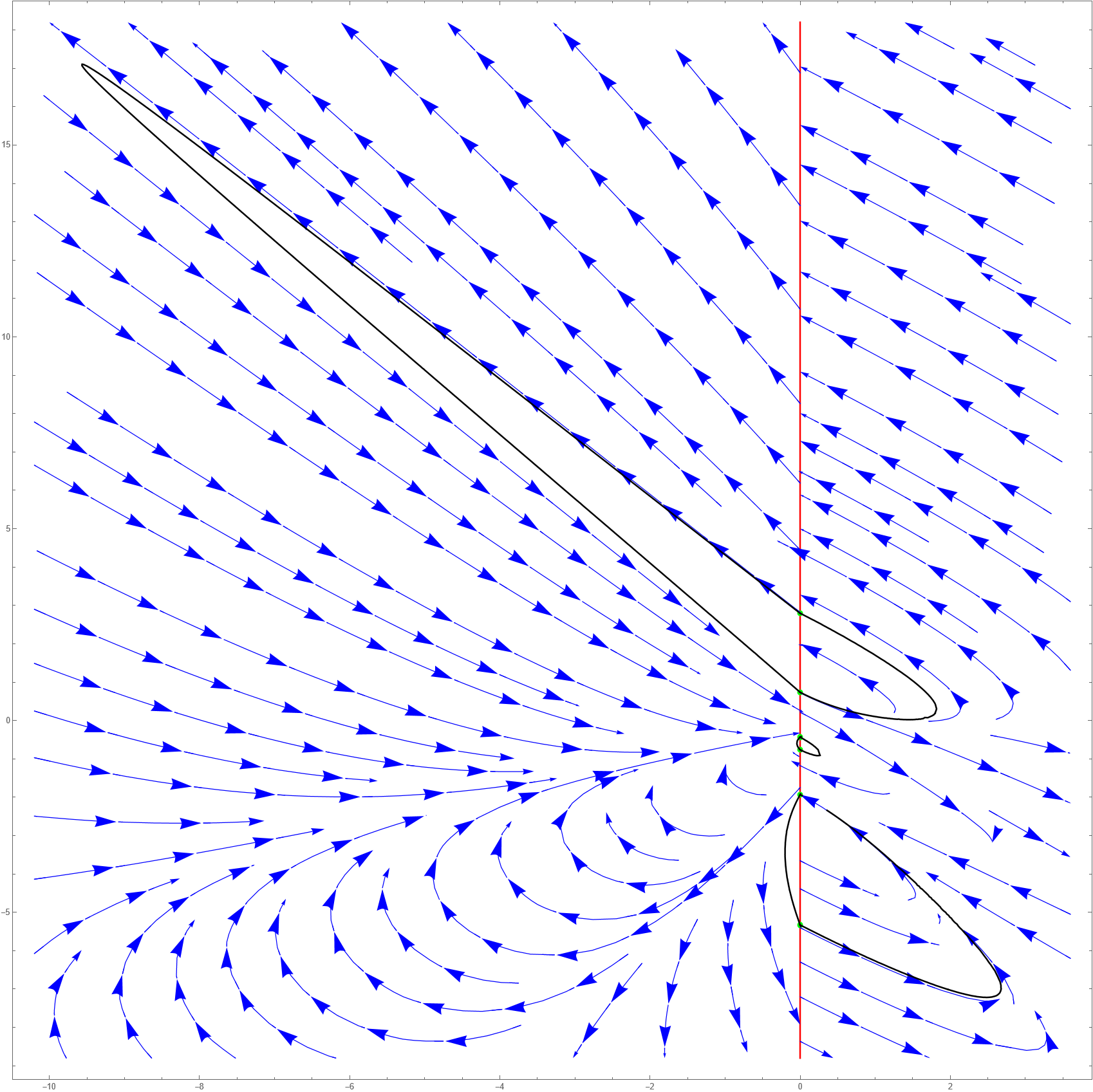}
\caption{The three limit cycle of the discontinuous piecewise differential system \eqref{sist1-S3-S3}-\eqref{sist2-S3-S3} of Theorem \ref{Thm2}.}
\label{fig-S3-S3} 
\end{figure}

\subsection*{\textbf{Proof of Theorem \ref{Thm2} for systems  $\mathtt{S}_3- \mathtt{S}_4$.}}

\smallskip

Now we shall prove that the discontinuous piecewise differential system $\eqref{syst:S3}-\eqref{syst:S4}$ separated by the straight line $\Sigma:x=0$, having three limit cycles. 

In $\Sigma^+$,  we consider the cubic isochronous center of type \eqref{syst:S3}
\begin{equation}\label{sist1-S3-S4}
    \begin{aligned}
    \dot{x} =& \frac{1}{700} \big( 1944 x^3 + 324 x^2 (2 y-13) - 24 x y (46 y+95) + 2558 x + 2 y (4 y (81-16 y) + 433) \\
    & - 605 \big), \\[4pt]
    \dot{y} =& \frac{1}{1400} \big( 2808 x^3 + 36 x^2 (124 y-113) - 528 x y (5 y+19) + 2830 x + 4 y (66 y (1-4 y) + 1073) \\
    & - 1157 \big),       
   \end{aligned}
\end{equation}
with the first integral
\begin{equation*}
H_3(x,y) = \frac{ \left(-\frac{3}{5} x + \frac{2}{5} y + \frac{7}{10}\right)^2 + \left(-\frac{4}{5} x - \frac{2}{5} y + \frac{2}{5}\right)^2 - 4 \left(-\frac{3}{5} x + \frac{2}{5} y + \frac{7}{10}\right)^4  + 4 \left(-\frac{3}{5} x + \frac{2}{5} y + \frac{7}{10}\right)^6}{\left(- 1 + 3 \left(-\frac{3}{5} x + \frac{2}{5} y + \frac{7}{10}\right)^2\right)^3}.
\end{equation*}

In $\Sigma^{-}$, we consider the cubic isochronous center of type \eqref{syst:S4}
\begin{equation}\label{sist2-S3-S4}
    \begin{aligned}
    \dot{x} =& -0.621976 x^3 + x^2 (-2.24036 y-1.20363) + x (y (5.88238 y+18.0032)+13.1839) \\
    & + y ((-2.13752 y-12.6274) y-24.6751) - 15.8508, \\[4pt]
    \dot{y} =& -0.0474434 x^3 + x^2 (-2.57852 y-3.02589) + x (y (4.60748 y+15.4492)+13.1959) \\
    & + y ((-1.2664 y-8.39959) y-17.7854) - 12.5349,          
   \end{aligned}
\end{equation}
with first integral    
\begin{equation*}
    \begin{aligned}
H_4(x,y) =& \frac{1}{\left( 1 + 3 \left(\frac{5}{9} x - \frac{7}{9} y - 1.40238\right)^2\right)^3} \Big( \left(\frac{5}{9} x - \frac{7}{9} y - 1.40238\right)^2 \\
& + 4 \left(\frac{5}{9} x - \frac{7}{9} y - 1.40238\right)^4 + 4 \left(\frac{5}{9} x - \frac{7}{9} y - 1.40238\right)^6 \\
& + \left(\frac{1}{3} x + 0.111599 y - 0.136717\right)^2  \Big).
   \end{aligned}
\end{equation*}

For the piecewise differential systems~\eqref{sist1-S3-S4}--\eqref{sist2-S3-S4}, crossing limit cycles intersect the discontinuity curve $\Sigma$ at pairs of distinct points $(p,q)$, with $p=(0,x)$ and $q=(0,y)$, $x \neq y$, if and only if
\begin{equation}\label{thm2:S3-S4}
    \begin{aligned}
        H_{3}(0,x) &= H_{3}(0,y),\\ 
        H_{4}(0,x) &= H_{4}(0,y).
    \end{aligned}
\end{equation}
This is equivalent to
\begin{align*}
&16240 x^3(391 + 72y(-19 + 4y)) 
+ 10160 x^4(391 + 72y(-19 + 4y)) \\
&\quad + 2688 x^5(391 + 72y(-19 + 4y)) 
+ 256 x^6(391 + 72y(-19 + 4y)) \\
&\quad + x\Big(1336445 
+ 72y^2\big(199327 + 304y(1015 + y(635 + 8y(21 + 2y)))\big)\Big) \\
&\quad - y\Big(1336445 
+ y\big(3820583 + 6256y(1015 + y(635 + 8y(21 + 2y)))\big)\Big) \\
&\quad - x^2\Big(-3820583 
+ 72y\big(199327 + 64y^2(1015 + y(635 + 8y(21 + 2y)))\big)\Big)=0.
\end{align*}
\begin{align*}
&-521640 x^3(-887 + 108y(-15 + 2y)) 
+ 270540 x^4(-887 + 108y(-15 + 2y)) \\
&\quad - 81648 x^5(-887 + 108y(-15 + 2y)) 
+ 11664 x^6(-887 + 108y(-15 + 2y)) \\
&\quad + 2x\Big(302421613 
+ 54y^2\big(8426899 + 4860y(-1610 + y(835 + 36(-7 + y)y))\big)\Big) \\
&\quad + y\Big(-604843226 
+ y\big(578956391 + 287388y(-1610 + y(835 + 36(-7 + y)y))\big)\Big) \\
&\quad - x^2\Big(578956391 
+ 108y\big(8426899 + 648y^2(-1610 + y(835 + 36(-7 + y)y))\big)\Big)=0.
\end{align*}
Solving~\eqref{thm2:S3-S4} yields three distinct real pairs $(p_i,q_i)$, where $p_i=(0,x_i)$ and $q_i=(0,y_i)$ for $i=1,2,3$, satisfying $x_i<y_i$. More precisely,
\begin{align*}
p_1 =& (0, -2.73556 \ldots), \quad q_1=(0, -0.551073 \ldots), \\
p_2 =& (0, -2.3282 \ldots), \quad q_2=(0, -0.815007 \ldots), \\
p_3 =& (0, -1.78878 \ldots), \quad q_3=(0, -1.2422 \ldots),
\end{align*}
These pairs characterize the crossing limit cycles depicted in Figure \ref{fig-S3-S4}. 

\begin{figure}
\centering
\includegraphics[scale=0.30]{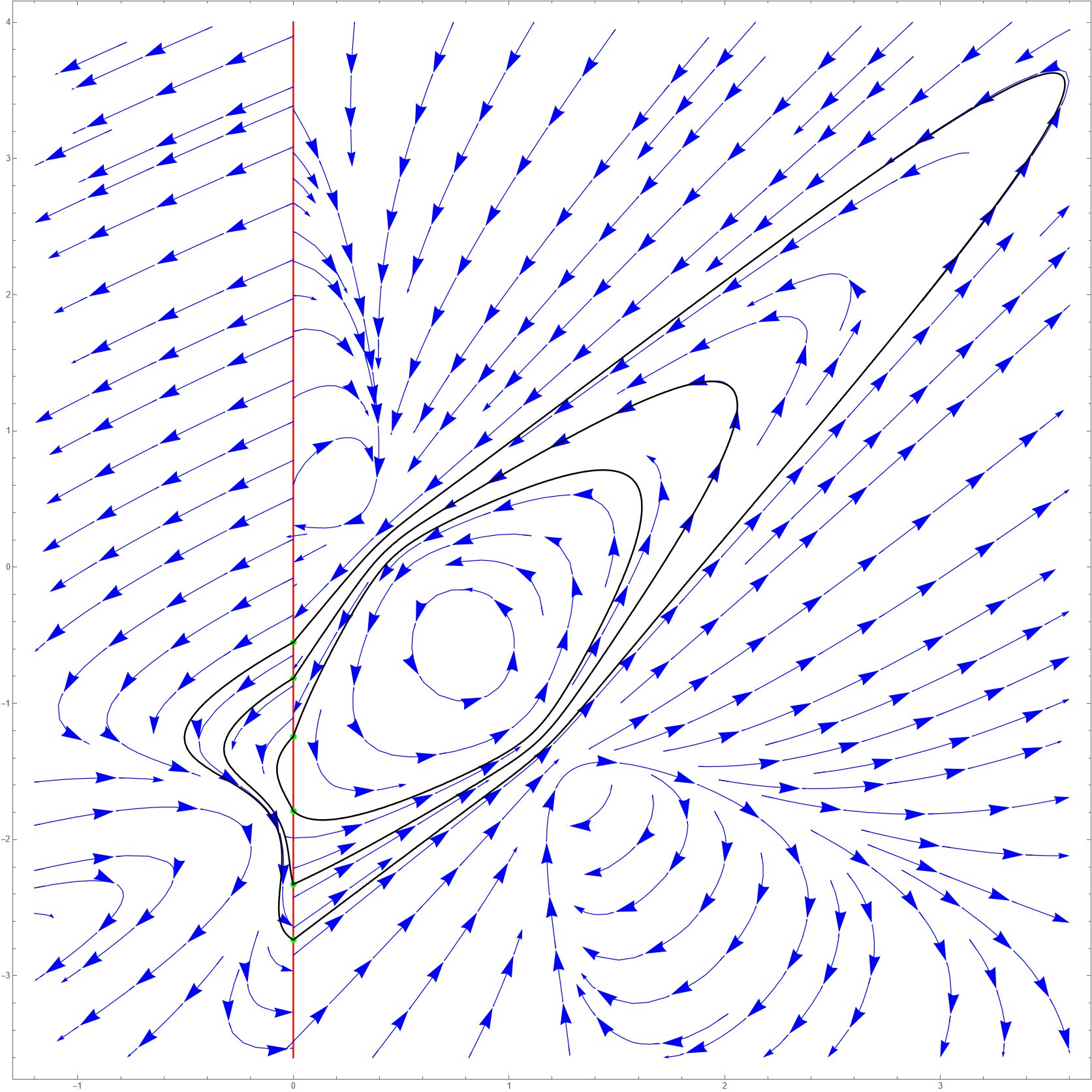}
\caption{The three limit cycle of the discontinuous piecewise differential system \eqref{sist1-S3-S4}-\eqref{sist2-S3-S4} of Theorem \ref{Thm2}.}
\label{fig-S3-S4} 
\end{figure}

\subsection*{\textbf{Proof of Theorem \ref{Thm2} for systems  $\mathtt{S}_4- \mathtt{S}_4$.}}

\smallskip

Now we shall prove that the discontinuous piecewise differential system $\eqref{syst:S4}-\eqref{syst:S4}$ separated by the straight line $\Sigma:x=0$, having three limit cycles. 

In $\Sigma^+$, we consider the cubic isochronous center of type \eqref{syst:S4}
\begin{equation}\label{sist1-S4-S4}
\begin{aligned}
\dot{x} &= \frac{1}{4900}\Big(
24066 - 3086 x^3 + x^2 (28203 - 9687 y)
- 2 y \big(-22646 + 5 y (2037 + 1090 y)\big) \\
&\qquad + x \big(-55945 + 3 y (-4774 + 8845 y)\big)
\Big), \\[6pt]
\dot{y} &= \frac{1}{4900}\Big(
12131 + 382 x^3
- 6 x^2 (357 + 25 y)
+ x \big(-11057 + 3 (5894 - 2847 y) y\big) \\
&\qquad + y \big(-1679 + y (-9303 + 8795 y)\big)
\Big),
\end{aligned}
\end{equation}
with the first integral
\begin{equation*}
\begin{aligned}
H_4(x,y) = \frac{
\left(-\frac{7}{10} + \frac{4x}{5} - \frac{9y}{10}\right)^2
+ \left(-\frac{7}{10} + \frac{x}{10} + \frac{y}{2}\right)^2
+ 4\left(-\frac{7}{10} + \frac{x}{10} + \frac{y}{2}\right)^4
+ 4\left(-\frac{7}{10} + \frac{x}{10} + \frac{y}{2}\right)^6
}{
\left(1 + 3\left(-\frac{7}{10} + \frac{x}{10} + \frac{y}{2}\right)^2\right)^3
}.
\end{aligned}
\end{equation*}

In $\Sigma^-$, we consider the cubic isochronous center of type \eqref{syst:S4}
\begin{equation}\label{sist2-S4-S4}
\begin{aligned}
\dot{x} &= \frac{
863 + 3 x \big(209 + 72 x (8 + 17 x)\big)
+ 851 y + 18 x (-74 + 171 x) y
- 72 (9 + 8 x) y^2
- 286 y^3
}{1500}, \\[6pt]
\dot{y} &= \frac{1}{500}\Big(
-537 + 3 x \big(-331 + 18 x (37 + 23 x)\big)
+ (-1409 + 72 x (4 + 39 x)) y\\
&\qquad + 6 (-203 + 69 x) y^2
- 496 y^3
\Big),
\end{aligned}
\end{equation}
with the first integral
\begin{equation*}
\widetilde{H}_4(x,y) =\frac{
\left(-\frac{1}{2} - \frac{9x}{10} - \frac{7y}{10}\right)^2
+ \left(-\frac{1}{5} + \frac{3x}{5} - \frac{y}{5}\right)^2
+ 4\left(-\frac{1}{5} + \frac{3x}{5} - \frac{y}{5}\right)^4
+ 4\left(-\frac{1}{5} + \frac{3x}{5} - \frac{y}{5}\right)^6
}{
\left(1 + 3\left(-\frac{1}{5} + \frac{3x}{5} - \frac{y}{5}\right)^2\right)^3
}.
\end{equation*}

For the piecewise differential systems~\eqref{sist1-S4-S4}--\eqref{sist2-S4-S4}, crossing limit cycles intersect the discontinuity curve $\Sigma$ at pairs of distinct points $(p,q)$, with $p=(0,x)$ and $q=(0,y)$, $x \neq y$, if and only if
\begin{equation}\label{thm2:S4-S4}
    \begin{aligned}
        H_{4}(0,x) &= H_{4}(0,y), \\
    \widetilde{H_4}(0,x) &= \widetilde{H_4}(0,y).
    \end{aligned}
\end{equation}

This is equivalent to
\begin{align*}
&x\Big(-1468242594 
+ x\big(-133306394 
+ 150625\, x \big(-1932 + 5x\big(167 + x(-42 + 5x)\big)\big)\big)\Big) \\
&\quad + 126\Big(11652719 
+ 5x^2\big(7716527 
+ 2000x\big(-1932 + 5x\big(167 + x(-42 + 5x)\big)\big)\big)\Big) y \\
&\quad + \Big(133306394 
- 4861412010\, x 
+ 613125\, x^3 \big(-1932 + 5x\big(167 + x(-42 + 5x)\big)\big)\Big) y^2 \\
&\quad + 1207500\,(241 + 9x(224 + 109x))\, y^3  - 521875\,(241 + 9x(224 + 109x))\, y^4 \\
&\quad + 131250\,(241 + 9x(224 + 109x))\, y^5  - 15625\,(241 + 9x(224 + 109x))\, y^6=0.
\end{align*}
\begin{align*}
&120x^3(239 + 9y(202 + 143y)) 
+ 40x^4(239 + 9y(202 + 143y))  + 6x^5(239 + 9y(202 + 143y)) 
\\
&\quad + x^6(239 + 9y(202 + 143y)) - 2x\Big(676592 + 9y^2\big(336 + 101y(120 + y(40 + y(6 + y)))\big)\Big) \\
&\quad - y\Big(-1353184 + y\big(-957152 + 239y(120 + y(40 + y(6 + y)))\big)\Big) \\
&\quad - x^2\Big(957152 + 9y\big(-672 + 143y^2(120 + y(40 + y(6 + y)))\big)\Big)=0.
\end{align*}
Solving~\eqref{thm2:S4-S4} yields three distinct real pairs $(p_i,q_i)$, where $p_i=(0,x_i)$ and $q_i=(0,y_i)$ for $i=1,2,3$, satisfying $x_i<y_i$. More precisely,
\begin{align*}
p_1 =& (0, -1.24884 \ldots), \quad q_1=(0, -0.166809 \ldots), \\
p_2 =& (0, -0.123252 \ldots), \quad q_2=(0, 11.2846 \ldots), \\
p_3 =& (0, 0.165367 \ldots), \quad q_3=(0, 4.39164 \ldots),
\end{align*}
These pairs characterize the crossing limit cycles depicted in Figure \ref{fig-S4-S4}. 

\begin{figure}
\centering
\includegraphics[scale=0.30]{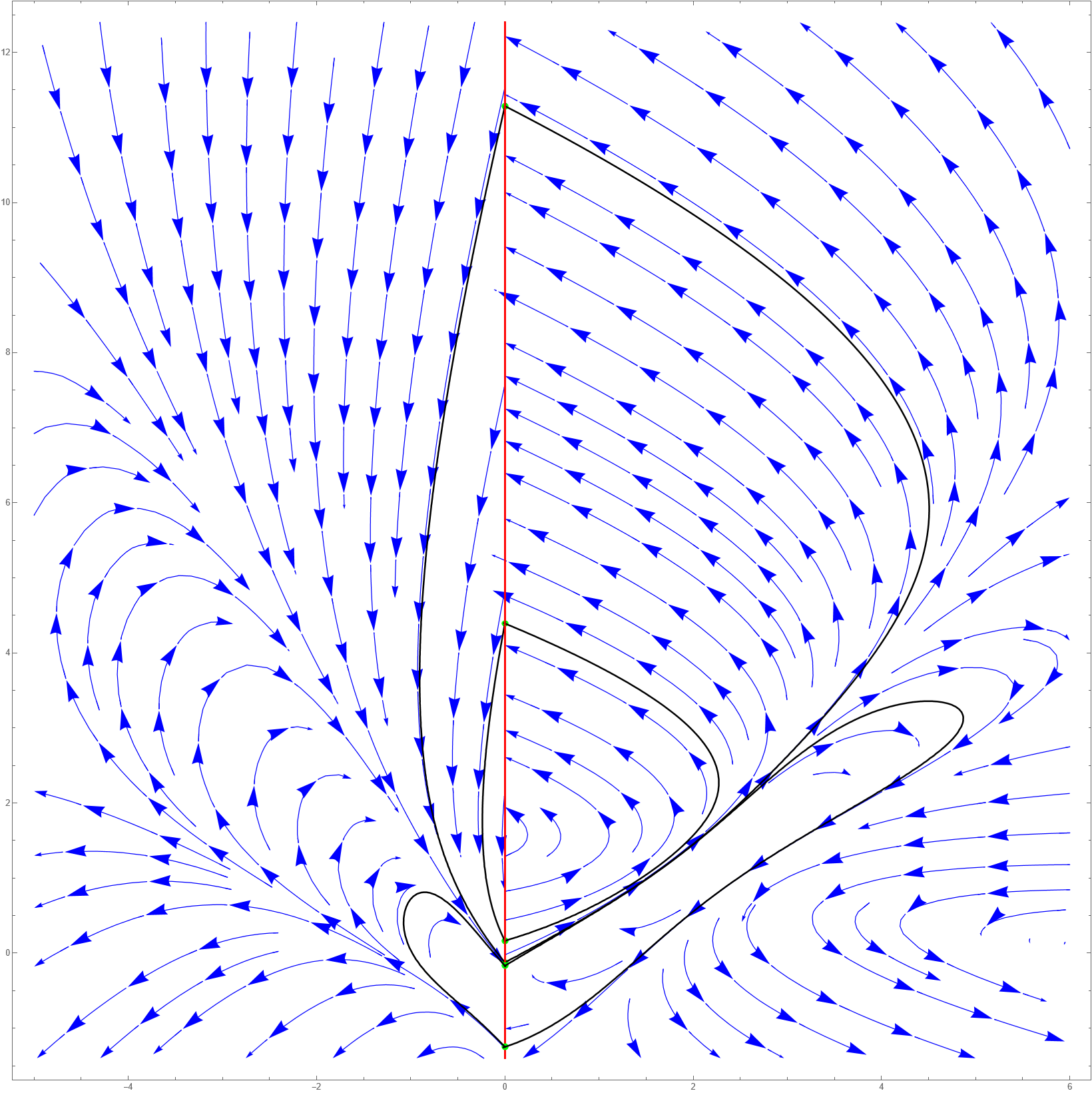}
\caption{The three limit cycle of the discontinuous piecewise differential system \eqref{sist1-S4-S4}-\eqref{sist2-S4-S4} of Theorem \ref{Thm2}.}
\label{fig-S4-S4} 
\end{figure}



It remains an open problem whether the upper bounds established in Theorem~\ref{Thm1} are sharp. In contrast, Theorem~\ref{Thm2} provides explicit constructions of systems exhibiting three crossing limit cycles, establishing a nontrivial lower bound within this class. The gap between the current upper and lower bounds remains unresolved.

\section*{Acknowlegements}

The first author acknowledges partial support from the 2026 Summer Postdoctoral Program at the Instituto de Matemática Pura e Aplicada (IMPA) and from a scholarship granted by the Fundação Arthur Bernardes (FUNARBE).
The second author acknowledges partial support from CNPq under grant No.~169201/2023-6.


%
\bibliographystyle{acm}
\bibliography{sample}

\end{document}